\documentclass[12pt]{amsart}
\usepackage{geometry}
\geometry{left=1in, right=1in, top=1in, bottom=1in}
\usepackage{graphicx}	
\usepackage{amsthm,amsmath,amssymb}
\usepackage[all]{xy}                   

\newtheorem{thm}{Theorem}[section]
\newtheorem{cor}[thm]{Corollary}

\newtheorem{lem}[thm]{Lemma}
\newtheorem{ex}[thm]{Example}

\newcommand{\A}{\mathcal{A}}
\newcommand{\Z}{\mathbb{Z}}
\newcommand{\N}{\mathbb{N}}
\newcommand{\pos}{\mathbb{P}}
\newcommand{\Lin}{\mathcal{L}}
\newcommand{\collin}{\mathcal{CL}}
\newcommand{\symn}{\mathfrak{S}_{n}}

\newcommand{\colpermrn}{G_{r,n}}
\newcommand{\zigipi}{Z(I,\pi)}
\newcommand{\chainipi}{C(I,\pi)}
\newcommand{\dsum}{\displaystyle\sum}

\newcommand\mchoose[2]{\left(\!\! \binom{#1}{#2}\!\!\right)}
\newcommand\rversion[1]{{#1}_{(r)}}
\newcommand\shuffle[1]{\shuf(#1)}
\newcommand{\anchoredcolpermrn}{\colpermrn^0}
\newcommand{\rfoldx}{X_{(r)}}
\newcommand{\pless}{\prec}
\newcommand{\pgreat}{\succ}

\DeclareMathOperator{\des}{des}
\DeclareMathOperator{\Des}{Des}
\DeclareMathOperator{\intdes}{intdes}
\DeclareMathOperator{\intDes}{intDes}
\DeclareMathOperator{\shuf}{Sh}

\title{The Colored Eulerian Descent Algebra}
\author{Matthew Moynihan}
\address{Department of Mathematics and Computer Science \\
The College of Wooster \\
Wooster, OH 44691}
\email{mmoynihan@wooster.edu}
\date{\today}
\subjclass[2010]{Primary 05E15; Secondary 06A07, 06A11}
\keywords{colored posets, colored $P$-partitions, colored Eulerian descent algebra}

\begin{document}

\begin{abstract}
Using a new colored analogue of $P$-partitions, we prove the existence of a colored Eulerian descent algebra which is a subalgebra of the Mantaci-Reutenauer algebra.
This algebra has a basis consisting of formal sums of colored permutations with the same number of descents (using Steingr{\'{\i}}msson's definition of the descent set of a colored permutation).
The colored Eulerian descent algebra extends familiar Eulerian descent algebras from the symmetric group algebra and the hyperoctahedral group algebra to colored permutation group algebras.
We also describe a set of orthogonal idempotents that spans the colored Eulerian descent algebra and includes, as a special case, the familiar Eulerian idempotents in the group algebra of the symmetric group.
\end{abstract}

\maketitle


In \cite{Solomon1976}, Solomon proved the existence of a subalgebra of the group algebra of the symmetric group whose basis elements are formal sums of permutations with the same descent set (the set of all $i$ such that $\pi(i) > \pi(i+1)$).
Solomon also proved that an analogous algebra can be defined for all finite Coxeter groups using the length function associated to a specified generating set.
Loday later proved in \cite{Loday1989} that Solomon's descent algebra contains a subalgebra whose basis elements are formal sums of permutations with the same number of descents.
This algebra is often called the Eulerian subalgebra.
Fran{\c{c}}ois Bergeron and Nantel Bergeron extended these results to the hyperoctahedral group (see \cite{BergeronBergeron1992I, BergeronBergeron1992, Bergeron1992II}).

In \cite{MantaciReutenauer1995}, Mantaci and Reutenauer proved the existence of an algebra whose basis elements are formal sums of colored permutations with the same associated colored compositions.
Colored permutations can be thought of as permutations from the symmetric group together with a choice of a ``color'' from a finite cyclic group for each letter in a given permutation.
Colored compositions describe both the colors of the letters of a colored permutation and the descent set within each monochromatic run.
The Mantaci-Reutenauer algebra has been well studied and fills much the same role as Solomon's descent algebra does in the theory of the group algebra of the symmetric group (see \cite{BaumannHohlweg2008, Poirier1998}).

While colored permutations do not usually form a Coxeter group, we show that there still exists an Eulerian subalgebra of the Mantaci-Reutenauer algebra when descents are defined using Steingr{\'{\i}}msson's definition of the descent set of a colored permutation from \cite{Steingrimsson1994}.
Our proof is a modified version of Petersen's proof in \cite{Petersen2005}, which was in turn inspired by Gessel's work on multipartite $P$-partitions in \cite{Gessel1984}.
We also describe a set of colored Eulerian idempotents which spans this colored Eulerian descent algebra and reduces to the set of well-known Eulerian idempotents in the group algebra of the symmetric group when considering permutations of a single color.

This paper is organized as follows.
In Section~\ref{sec: colored permutation groups}, we formally define colored permutations and the descent set of a colored permutation.
In Section~\ref{sec: colored posets} and Section~\ref{sec: colored p-partitions}, we define colored posets and colored $P$-partitions.
Our definitions are different from those of Hsiao and Petersen in \cite{HsiaoPetersen2010}.
They define colored posets to be posets with colored labels and obtain results about the Mantaci-Reutenauer algebra.
Our approach differs in the way we define the set of linear extensions of a given colored poset and allows us to count colored permutations by descent number.
In Section~\ref{sec: colored descent algebra}, we prove that the descent number induces an algebra which is a subalgebra of the Mantaci-Reutenauer algebra.
Finally, in Section~\ref{sec: colored eulerian idempotents}, we describe a set of colored Eulerian idempotents.


\section{Colored Permutation Groups}
\label{sec: colored permutation groups}

Let $[n]$ denote the set of integers $\{1,2,\ldots,n\}$ and let $[0,n]$ denote the set of integers $\{0,1,2,\ldots,n\}$.
Both sets are totally ordered using the natural ordering of the integers.
We denote by $\mathfrak{S}_n$ the group of permutations of $[n]$.
For every totally ordered set $X$,  we denote by $\rfoldx$ the set $[0,r-1] \times X$ ordered lexicographically.
Thus $(i,x) < (j,y)$ in $\rfoldx$ if either $i<j$ as integers or if both $i=j$ and $x < y$ in $X$.
For notational convenience, we often denote the subset $\{k\} \times X$ by $X_k$ and write $x_k$ in place of $(k,x)$.
For example, $[n]_{(r)} = \{i_j \, : \, 1 \leq i \leq n, \, 0 \leq j < r \}$ with total order
\[
1_0 < \cdots < n_0 < 1_1 < \cdots < n_1 < \cdots < 1_{r-1} < \cdots < n_{r-1}.
\]
The \emph{colored permutation group} $\colpermrn$ is the group of all bijections $\pi: [n]_{(r)} \rightarrow [n]_{(r)}$ such that $\pi(i_j) = k_l$ implies that $\pi(i_{j+a}) = k_{l+a}$ for $1 \leq a < r$ with the sums $j+a$ and $l+a$ evaluated modulo $r$.
Elements of $\colpermrn$ are called \emph{$r$-colored permutations}.

All colored permutations are determined by $\pi(i_0)$ for $1 \leq i \leq n$, so we write colored permutations in one-line notation as $\pi = \pi(1_0) \pi(2_0) \cdots \pi(n_0)$.
In practice, we often simplify this notation and write $\pi(i)$ in place of $\pi(i_0)$.
Every colored permutation is built from an underlying permutation in $\mathfrak{S}_{n}$ obtained from $\pi$ by ignoring the subscripts (colors) of the letters.
We denote this permutation by $|\pi| = |\pi(1)| \cdots |\pi(n)|$ where $|x_k| = x$ for every $x_k \in \rfoldx$.
Define the color map $\epsilon$ by $\epsilon(x_k) = k$ for all $x_k \in \rfoldx$.
For example, the colored permutation $\pi = 2_0 1_3 3_1 5_2 4_2$ is an element of $G_{4,5}$.
Here $|\pi| = 21354$ and $\epsilon(5_2) = 2$.

The group operation in $\colpermrn$ is function composition.
In one-line notation, this means that if $\pi(i) = j_k$ and $\sigma(j) = l_p$, then $(\sigma \pi)(i) = l_{k+p}$ where the sum is evaluated modulo $r$.
As a group, $\colpermrn$ is isomorphic to the wreath product $\Z_r \wr \symn$ where $\Z_r$ denotes the finite cyclic group of order $r$.
Note that colored permutation composition depends on $r$.
For example, if $\pi = 2_0 1_3 3_1 5_2 4_2$ and $\sigma = 3_1 1_1 5_0 2_1 4_3$ are colored permutations in $G_{4,5}$, then $\sigma \pi = 1_1 3_0 5_1 4_1 2_3$.
If instead $\pi, \sigma \in G_{5,5}$, then $\sigma \pi = 1_1 3_4 5_1 4_0 2_3$.

Define the \emph{descent set} $\Des(\pi)$ of a colored permutation $\pi$ written in one-line notation to be the set of all $i \in [n]$ such that $\pi(i) > \pi(i+1)$ with respect to the total order on $[n]_{(r)}$ and with $\pi(n+1) = 0_1$.
Thus $n \in \Des(\pi)$ if and only if $\epsilon(\pi(n)) \neq 0$.
The \emph{descent number} of $\pi$ is $\des(\pi) = |\Des(\pi)|$.
In preparation for later results, we also define the \emph{internal descent set} $\intDes(\pi) = \Des(\pi) \cap [n-1]$ and let $\intdes(\pi) = |\intDes(\pi)|$.
For example, if $\pi = 3_1 1_1 4_0 2_3$, then $\Des(\pi) = \{1,2,4\}$ with $\des(\pi) = 3$.
Also, $\intDes(\pi) = \{1,2\}$ and $\intdes(\pi) = 2$.
Note that this definition of the descent set of a colored permutation is equivalent to that of Steingr{\'{\i}}msson in~\cite{Steingrimsson1994}.

\newpage

\section{Colored Posets}\label{sec: colored posets}

Other authors have altered and extended poset definitions to link linear extensions of posets with specific permutation statistics.
In \cite{ChowThesis2001}, Chow defined signed posets to count signed permutations by the number of descents.
Hsiao and Petersen later defined colored posets in \cite{HsiaoPetersen2010} to obtain results about the Mantaci-Reutenauer algebra.
We give an alternate definition of colored posets that provides information about the descent numbers of colored permutations.

We define an \emph{$r$-colored poset} $P$ to be a partially ordered finite subset of $\{0_1,0_2,\ldots,0_{r-1}\}  \cup  [n]_{(r)}$ such that the elements from $[n]_{(r)}$ all have distinct absolute values. We denote the partial order on $P$ by $\pless$ and require that $r$-colored posets contain the chain $0_1 \pless 0_2 \pless \cdots \pless 0_{r-1}$.
Figure~\ref{fig: 4-colored poset} is the Hasse diagram of the $r$-colored poset $\{0_1, 0_2, 0_3, 1_0, 2_1, 3_1\}$ with partial order $0_1 \pless 0_2 \pless 1_0 \pless 3_1 \pless 0_3$ and $2_1 \pless 1_0$.
\begin{figure}[htbp]
\begin{center}
\[\xymatrix @R=10pt @C=10pt { & & & 0_3 \\ & & 3_1\ar@{-}[ur] & \\  & & 1_0\ar@{-}[u] & \\  & 0_2\ar@{-}[ur] & & 2_1\ar@{-}[ul]  \\  0_1\ar@{-}[ur] & & &  }\] 
\caption{The Hasse diagram of the $4$-colored poset with partial order $0_1 \pless 0_2 \pless 1_0 \pless 3_1 \pless 0_3$ and $2_1 \pless 1_0$. \label{fig: 4-colored poset}}
\end{center}
\end{figure}
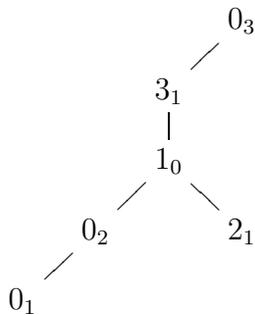

We define colored linear extensions of these colored posets in two stages.
First, given words $w_1$, $w_2$, \ldots, $w_j$ with disjoint letters, let $\shuffle{w_1,w_2,\ldots, w_j}$ denote the set of all shuffles of $w_1$, $w_2$, \ldots, $w_j$.
Thus $\shuffle{w_1,w_2,\ldots, w_j}$ is the set of all words on the union of the letters of $w_1$, $w_2$, \ldots, $w_j$ that contain each of the $w_i$ as subwords when read from left to right.
Note that $\shuffle{w} = \{w\}$.
For every $\pi \in \colpermrn$, let $\shuffle{\pi,0}$ denote the set of shuffles of $\pi$ (written in one-line notation) with the word $0_10_2\cdots 0_{r-1}$.
We define the set of \emph{anchored $r$-colored permutations}, denoted by $\anchoredcolpermrn$, by
\[
\anchoredcolpermrn = \coprod_{\pi \in \colpermrn} \shuffle{\pi,0}.
\]
The set of \emph{linear extensions} of $P$, denoted by $\Lin(P)$, is defined to be the set of all $w \in \anchoredcolpermrn$ such that whenever $x \pless y$, then $x$ is to the left of $y$ in $w$.

We know that each $w \in \anchoredcolpermrn$ is a shuffle of a permutation $\pi \in \colpermrn$ with the word $0_1\cdots 0_{r-1}$.
The $0_i$ letters split the permutation $\pi$ into $r$ (possibly empty) subwords.
Number these subwords $0,1,\ldots,r-1$ from left to right.
For $i=0,\ldots,r-1$, define $\pi_i$ to be the $i$th subword with $i$ subtracted from the color of each letter modulo $r$.
We then define the colored linear extensions of $w \in \anchoredcolpermrn$ by $\collin(w) = \shuffle{\pi_0,\ldots,\pi_{r-1}}.$
Finally, we define the \emph{colored linear extensions} of a colored poset $P$ by
\[
\collin(P) = \coprod_{w \in \Lin(P)} \collin(w) = \coprod_{w \in \Lin(P)} \shuffle{\pi_0,\ldots,\pi_{r-1}}.
\]

\begin{ex}
Let $P$ be the $4$-colored poset in Figure~\ref{fig: 4-colored poset}.
Then
\[
\Lin(P) = \{0_1 0_2 2_1 1_0 3_1 0_3, 0_1 2_1 0_2 1_0 3_1 0_3, 2_1 0_1 0_2 1_0 3_1 0_3\}
\]
and hence
\begin{align*}
\collin(P) &= \shuffle{2_3 1_2 3_3} \cup \shuffle{2_0,1_23_3} \cup \shuffle{2_1, 1_2 3_3} \\
&= \{1_22_03_3, 1_22_13_3, 1_23_32_0, 1_23_32_1, 2_01_23_3, 2_11_23_3, 2_31_23_3\}.
\end{align*}
\end{ex}


\section{Colored $P$-partitions}\label{sec: colored p-partitions}

In \cite{Knuth1970}, Knuth first defined $P$-partitions (see \cite{Gessel2014} for a history of the development of $P$-partitions).
These maps can be used to derive results about the descent set and descent number of permutations (Stanley provides an excellent introduction in Chapter 3 of \cite{Stanley2012}).
Just as with posets, many authors have altered and extended the definition of $P$-partitions to suit their needs.
Stembridge defined enriched $P$-partitions in \cite{Stembridge1997} to study peaks of permutations.
In \cite{ChowThesis2001}, Chow defined signed posets and signed $P$-partitions to count signed permutations by the number of descents.
Later, Petersen in \cite{Petersen2005} altered Chow's definition to study augmented descents of signed permutations.

Let $\pos$ denote the set of positive integers and $\N$ denote the set of nonnegative integers.
The obvious extension of Chow's definition to $\colpermrn$ is to define colored $P$-partitions as maps $f:P \rightarrow \N_{(r)}$ such that if $x \pless y$, then $f(x) \leq f(y)$ in $\N_{(r)}$ with some added restrictions.
We also want to require that if $x \pless y$ and $x>y$ in $[0,n]_{(r)}$, then $f(x) < f(y)$ in $\N_{(r)}$, and if $f(x_j) = k_l$, then $f(x_{j+a}) = k_{l+a}$ for $a=1,\ldots,r-1$.
This approach unfortunately fails because these conditions may be contradictory.
As colors cycle through $\Z_r$, the relationship between $x$ and $y$ can change.
For example, if $\pi = 2_0 1_1$ is a colored permutation in $G_{3,2}$ and we define a poset $P(\pi)$ by $2_0 \pless 1_1$, then we would allow $f(2_0) = f(1_1)$.
This equality would imply that $f(2_2) = f(1_0)$, which would not be allowed since $2_2 > 1_0$ in $\pos_{(r)}$.
To overcome this shifting relationship, we must alter our definition of colored $P$-partitions.

Let $X = \{x_0, x_1, \ldots, x_{\infty}\}$ be a totally ordered countable set with a maximal element and total order $x_0 < x_1 < \cdots < x_{\infty}$.
Let $P$ be an $r$-colored poset.
A \emph{colored $P$-partition} is a function $f: P \rightarrow \rfoldx$ such that:
\begin{enumerate}
\item[(i)] $f(0_k) = (k,x_{0})$ for every $0_k \in P$,
\item[(ii)] $f(a) \leq f(b)$ if $a \pless b$,
\item[(iii)] if $f(a),f(b) \in X_k$, then $f(a) < f(b)$ if $a \pless b$ and $|a|_{\epsilon(a)-k} > |b|_{\epsilon(b)-k}$,
\item[(iv)] if $f(a) = (k,x_{\infty})$, then $\epsilon(a) = k$.
\end{enumerate}
Condition~$\mathrm{(iii)}$ says that if $f(a)$ and $f(b)$ are both mapped into the same color block $X_k$, then the inequality between them is weak or strict as determined by their shifted relationship, not their relationship in the one-line notation of $\pi$.
Condition~$\mathrm{(iv)}$ implies that only an element from $P$ of color $k$ can map to the maximal element in $X_k$.

Let $\mathcal{A}(P)$ denote the set of all colored $P$-partitions.
We can represent every colored permutation $\pi \in \colpermrn$ by the colored poset
\[
\pi(1) \pless \pi(2) \pless \cdots \pless \pi(n) \pless 0_1 \pless \cdots \pless 0_{r-1}.
\]
Note that $\pi$ is the only colored linear extension of this colored poset.
For notational ease, we call these colored $P$-partitions simply \emph{colored $\pi$-partitions} and denote the set of all such colored $\pi$-partitions by $\A(\pi)$.
Similarly, if $w\in \anchoredcolpermrn$, then we denote the set of all \emph{colored $w$-partitions} by $\A(w)$.
Note that $w$ can be represented by the following poset, denoted by $P(w)$,
\[
w(1) \pless w(2) \pless \cdots \pless w(n+r-1).
\]
Here $w$ is the only linear extension of this colored poset.
We have the following theorem which resembles the fundamental theorems from other authors (see \cite{ChowThesis2001, HsiaoPetersen2010, Petersen2005, Stanley1972, Stembridge1997}), so we call it the \emph{Fundamental Theorem of Colored $P$-partitions}.

\begin{thm}[FTCPP]
There is a bijection between the set of all colored $P$-partitions $\A(P)$ and the disjoint union of the sets of colored $\pi$-partitions over all colored linear extensions $\pi$ of $P$.
That is,
\[
\A(P) \leftrightarrow \coprod_{\pi \in \collin(P)} \A(\pi).
\]
\end{thm}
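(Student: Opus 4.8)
The plan is to construct an explicit ``sorting'' bijection $\Phi\colon\A(P)\to\coprod_{\pi\in\collin(P)}\A(\pi)$, following the template of Stanley's proof of the fundamental theorem of $P$-partitions and Petersen's signed adaptation. Given $f\in\A(P)$, first list the elements of $P$ in weakly increasing order of $f$-value, breaking a tie $f(a)=f(b)$ --- which forces $f(a),f(b)$ into a common color block $X_k$ --- by placing $a$ before $b$ when $|a|_{\epsilon(a)-k}<|b|_{\epsilon(b)-k}$ (treating $0_k$ as carrying the letter $(0,0)$, so that each $0_k$ falls just below the block-$k$ elements). Condition~(ii) makes the resulting word refine $\pless$; condition~(iii) forces the tie-break to respect $\pless$ on comparable elements, and in particular (since $|b|_{\epsilon(b)-k}>(0,0)$ whenever $|b|\ge 1$) forces $f(a)<f(0_k)$ whenever $a\pless 0_k$, so slotting $0_k$ just below its block never violates $\pless$. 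Hence this word is a genuine element $w_f\in\Lin(P)$. Reading off the blocks of $w_f$ (the letters between consecutive $0_i$), deleting the $0_i$, and decreasing the color of each block-$i$ letter by $i$ modulo $r$ produces a colored permutation $\pi_f$; since within each block the letters keep their $w_f$-order, $\pi_f$ is a shuffle of the color-shifted blocks of $w_f$, i.e.\ $\pi_f\in\collin(w_f)\subseteq\collin(P)$. Finally, put $g_f(0_k)=(k,x_0)$ and $g_f(c)=(0,x)$ when the block-$i$ letter $c$ of $\pi_f$ comes from $a\in P$ with $f(a)=(i,x)$, and set $\Phi(f)=(\pi_f,g_f)$.

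The engine behind well-definedness is the identity $|a|_{\epsilon(a)-i}=c=|c|_{\epsilon(c)-0}$ whenever $a\in P$ sits in block $i$: subtracting the block index from the color is precisely what converts condition~$\mathrm{(iii)}$ ``in block $i$'' for $P$ into condition~$\mathrm{(iii)}$ ``in block $0$'' along the chain $\pi_f(1)\pless\cdots\pless\pi_f(n)\pless 0_1\pless\cdots\pless 0_{r-1}$, and likewise for condition~$\mathrm{(iv)}$. Together with the facts that $g_f$ is weakly increasing along that chain (blocks appear in increasing order, and $f$ increases within a block) and that $g_f(0_k)=(k,x_0)$ by construction, this gives $g_f\in\A(\pi_f)$, so $\Phi$ is well defined. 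For the inverse $\Psi$, I would use that $\collin(P)=\coprod_{w\in\Lin(P)}\collin(w)$ is an honest disjoint union: a letter $c$ of $\pi\in\collin(P)$ must come from the $P$-element $a$ with $|a|=|c|$ and lies in block $\epsilon(a)-\epsilon(c)\bmod r$, which recovers the block structure of $\pi$ and hence the unique $w$ with $\pi\in\collin(w)$. Given also $g\in\A(\pi)$, one undoes the color shift and reads off $X$-coordinates: $h(0_k)=(k,x_0)$ and $h(a)=(i,x)$ when the corresponding letter $c$ lies in block $i$ and $g(c)=(0,x)$. That $h\in\A(P)$ uses that $w\in\Lin(P)$ refines $\pless$ (so every $\pless$-relation of $P$ becomes either a $\pless$-relation along the chain of $\pi$ or a forced inequality of block indices) for~(ii), and the same shift identity for~(iii) and~(iv). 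Checking $\Psi\circ\Phi=\mathrm{id}$ and $\Phi\circ\Psi=\mathrm{id}$ is then mechanical; the one point requiring attention is that the tie-break used by $\Phi$ (by the shifted letter) agrees with the order in which equal-$g$-value letters of a block occur in $\pi$, which is exactly what condition~$\mathrm{(iii)}$ for $g$ enforces.

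I expect the main obstacle to be the bookkeeping imposed by the cyclic colors rather than any single hard step: one must consistently identify, for each element of $P$, the color block of its image; verify that this index is exactly the amount by which its color must be shifted to land in $\collin(P)$; and check that conditions~$\mathrm{(iii)}$ and~$\mathrm{(iv)}$ --- which are phrased in terms of the shifted absolute values $|a|_{\epsilon(a)-k}$ precisely so that this succeeds --- survive the shift in both directions. The boundary cases deserve the most care: the chain $0_1\pless\cdots\pless 0_{r-1}$ and the interaction of non-zero elements with the $0_k$ (so that $w_f$ is a legitimate shuffle of a colored permutation with $0_1\cdots 0_{r-1}$), and the maximal element $x_\infty$ controlled by condition~$\mathrm{(iv)}$ (so that the decomposition of $\collin(P)$ over $\Lin(P)$ is genuinely disjoint and $\Psi$ is well defined). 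Once these are pinned down, the bijection and its inverse follow routinely.
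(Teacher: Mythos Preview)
Your overall strategy---a direct ``sorting'' bijection in the spirit of Stanley's proof of the classical FTPP---is a legitimate alternative to the paper's two-stage argument (induction on incomparable pairs to get $\A(P)=\coprod_{w\in\Lin(P)}\A(w)$, followed by an explicit color-shift bijection $\A(w)\leftrightarrow\coprod_{\pi\in\collin(w)}\A(\pi)$). However, your construction of $\pi_f$ contains a genuine error.

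You define $\pi_f$ by ``reading off the blocks of $w_f$, deleting the $0_i$, and decreasing the color of each block-$i$ letter by $i$''; this produces the \emph{concatenation} of the shifted blocks in block order. You then claim $g_f$ is weakly increasing along the chain $\pi_f(1)\pless\cdots\pless\pi_f(n)$ because ``blocks appear in increasing order, and $f$ increases within a block.'' But $g_f$ records only the $X$-coordinate of $f$, not the block index, so $g_f$ can \emph{decrease} across a block boundary. Concretely, take $r=2$, $n=2$, $P=\{1_0,2_0,0_1\}$ with no nontrivial relations, and $f(1_0)=(0,x_5)$, $f(2_0)=(1,x_3)$. Your sort gives $w_f=1_0\,0_1\,2_0$, hence $\pi_f=1_0\,2_1$ and $g_f(1_0)=(0,x_5)$, $g_f(2_1)=(0,x_3)$; but then $g_f(1_0)\not\le g_f(2_1)$, so $g_f\notin\A(\pi_f)$.

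The fix is that $\pi_f$ must be chosen by a \emph{second} sort: after shifting colors, order the letters $c$ by their $g_f$-value (equivalently, by the $X$-coordinate of $f$ on the corresponding $a\in P$), breaking ties by $c=|c|_{\epsilon(c)-0}$ itself. This second sort is exactly what selects the correct shuffle in $\collin(w_f)$, and it is what the paper's second stage accomplishes (implicitly invoking the classical FTPP on the disjoint union of the chains $\pi_0,\ldots,\pi_{r-1}$). Once you insert this step, your verification that $g_f\in\A(\pi_f)$ and your inverse $\Psi$ go through; the within-block order is preserved by both sorts (since within a block the $f$-sort and the $g_f$-sort agree), so $\pi_f$ is indeed a shuffle of the $\pi_i$, and your tie-break argument for $\Phi\circ\Psi=\mathrm{id}$ becomes correct.
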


\begin{proof}
We first prove that
\[
\A(P) = \coprod_{w \in \Lin(P)} \A(w).
\]
The proof is by induction on the number of incomparable pairs $(a, b)$.
If $a$ and $b$ are incomparable in $P$, then let $P_{ab}$ denote the poset obtained from $P$ by adding the relation $a \pless b$.
Similarly, $b \pless a$ in $P_{ba}$.
While it is clear that $\Lin(P) = \Lin(P_{ab}) \sqcup \Lin(P_{ba})$, we must show that $\mathcal{A}(P) = \mathcal{A}(P_{ab}) \sqcup \mathcal{A}(P_{ba})$.

If we assume that $|a| \neq 0$ and $b = 0_k$ for some $k\in [r-1]$, then $\A(P_{ab})$ contains all colored $P$-partitions $f$ such that $f(a) \in X_l$ for $l < k$.
Also, $\A(P_{ba})$ contains all colored $P$-partitions $f$ such that $f(a) \in X_l$ for $l \geq k$.
Thus $\mathcal{A}(P) = \mathcal{A}(P_{ab}) \sqcup \mathcal{A}(P_{ba})$ and, by induction, we can assume that the set $\{a, 0_1, \ldots, 0_{r-1}\}$ is totally ordered for every $a \in P$ with $|a| \neq 0$.

We next consider incomparable pairs $(a, b)$ with $|a|\neq 0 $ and $|b|\neq 0$.
Let $f \in \A(P)$. 
By the previous argument, we can assume that $f(a), f(b) \in X_k$ for some $k\in [0,r-1]$.
If $|a|_{\epsilon(a)-k} < |b|_{\epsilon(b)-k}$, then equations \eqref{eq: FTCPP less than case 1}, \eqref{eq: FTCPP less than case 2}, and \eqref{eq: FTCPP less than case 3} give the decomposition of $\A(P)$ into $\A(P_{ab})$ and $\A(P_{ba})$.
Note that if $\epsilon(a) \neq k$ and $\epsilon(b) = k$, then $|a|_{\epsilon(a)-k} \nless |b|_{\epsilon(b)-k}$.
\begin{align}
 & & & \underline{\A(P_{ab})} & & \underline{\A(P_{ba})} \notag \\
\epsilon(a) = k, \; & \epsilon(b) = k & f(a) \leq & f(b) \leq (k,x_{\infty}) & f(b) < & f(a) \leq (k,x_{\infty}) \label{eq: FTCPP less than case 1} \\
\epsilon(a) = k, \; & \epsilon(b) \neq k & f(a) \leq & f(b) < (k,x_{\infty}) & f(b) < & f(a) \leq (k,x_{\infty})\label{eq: FTCPP less than case 2} \\
\epsilon(a) \neq k, \; & \epsilon(b) \neq k & f(a) \leq & f(b) < (k,x_{\infty}) & f(b) < & f(a) < (k,x_{\infty}) \label{eq: FTCPP less than case 3}
\end{align}

Similarly, if $|a|_{\epsilon(a)-k} > |b|_{\epsilon(b)-k}$, then equations \eqref{eq: FTCPP greater than case 1}, \eqref{eq: FTCPP greater than case 2}, and \eqref{eq: FTCPP greater than case 3} give the decomposition of $\A(P)$ into $\A(P_{ab})$ and $\A(P_{ba})$.
Note that if $\epsilon(a) = k$ and $\epsilon(b) \neq k$, then $|a|_{\epsilon(a)-k} \ngtr |b|_{\epsilon(b)-k}$.
\begin{align}
 & & & \underline{\A(P_{ab})} & & \underline{\A(P_{ba})} \notag\\
\epsilon(a) = k, \; & \epsilon(b) = k & f(a) < & f(b) \leq (k,x_{\infty}) & f(b) \leq & f(a) \leq (k,x_{\infty}) \label{eq: FTCPP greater than case 1} \\
\epsilon(a) \neq k, \; & \epsilon(b) = k & f(a) < & f(b) \leq (k,x_{\infty}) & f(b) \leq & f(a) < (k,x_{\infty}) \label{eq: FTCPP greater than case 2} \\
\epsilon(a) \neq k, \; & \epsilon(b) \neq k & f(a) < & f(b) < (k,x_{\infty}) & f(b) \leq & f(a) < (k,x_{\infty}) \label{eq: FTCPP greater than case 3}
\end{align}
Thus $\mathcal{A}(P) = \mathcal{A}(P_{ab}) \sqcup \mathcal{A}(P_{ba})$ and we have $\A(P) = \coprod_{w \in \Lin(P)} \A(w)$.

The final step in the proof of the FTCPP is to describe a bijection between $\A(w)$ and the disjoint union of the sets of colored $\pi$-partitions over all colored linear extensions $\pi$ of $P(w)$.
That is,
\[
\A(w) \leftrightarrow \coprod_{\pi \in \collin(P(w))} \A(\pi).
\]
Suppose $w$ has underlying colored permutation $\sigma$.
First, given a colored $\pi$-partition $g$ for some $\pi \in \collin(P(w))$, we define a colored $w$-partition $f$.
Since $\pi \in \collin(P(w))$, we know that $\pi(k)$ is a letter in $\sigma_i$ for some $i\in [0,r-1]$.
Hence there exists an $l\in [n]$ such that $\pi(k) = |\sigma(l)|_{\epsilon(\sigma(l))-i}$.
We then define $f(\sigma(l)) = |g(\pi(k))|_{i}$.
Repeating this for every $k \in [n]$ defines the desired colored $w$-partition $f$.
Next, given a colored $w$-partition $f$, we define a colored $\pi$-partition $g$ for some $\pi \in \collin(P(w))$.
Since $\pi(k) = |\sigma(l)|_{\epsilon(\sigma(l))-i}$, we set $g(\pi(k)) = |f(\sigma(l))|_0$.
Repeating this for every $k\in [n]$ defines a colored $\pi$-partition $g$ and the composition of these two bijections in either order gives the identity.
\end{proof}

The following example illustrates both stages of the bijection found in the proof of the FTCPP.

\begin{ex}
Let $P$ be the 4-colored poset in Figure~\ref{fig: 4-colored poset} and let $f \in \mathcal{A}(P)$.
We first see that $f(1_0), f(3_1) \in X_2$.
Next we have $1_{0-2} = 1_2 < 3_3 = 3_{1-2}$, so $f(1_0) \leq f(3_1)$.
Then, since $\epsilon(3_1) \neq 2$, we have $f(3_1) < (2,x_{\infty})$.
Taken together, we have $(2,x_0) \leq f(1_0) \leq f(3_1) < (2,x_{\infty})$.
By examining the image of $2_1$, $\A(P)$ can be decomposed into the three sets.
\begin{enumerate}
\item If $f(2_1) \in X_2$, then we have the set $\A(0_1 0_2 2_1 1_0 3_1 0_3)$ which is equal to
\[
\{f \, : \, (2,x_{0}) \leq f(2_1) < f(1_0) \leq f(3_1) < (2,x_{\infty}) \}.
\]
Here $f(2_1) < f(1_0)$ because $2_1 \pless 1_0$ and $2_{1-2} = 2_3 > 1_2 = 1_{0-2}$ in $\pos_{(r)}$.

\item If $f(2_1) \in X_1$, then we have the set $\A(0_1 2_1 0_2 1_0 3_1 0_3)$ which is equal to
\[
\{f \, : \, (1,x_0) \leq f(2_1) \leq (1,x_{\infty}) \text{ and } (2,x_{0}) \leq f(1_0) \leq f(3_1) < (2,x_{\infty})\}.
\]
Here $f(2_1) \leq (1,x_{\infty})$ because $\epsilon(2_1) = 1$.

\item If $f(2_1) \in X_0$, then we have the set $\A(2_1 0_1 0_2 1_0 3_1 0_3)$ which is equal to
\[
\{f \, : \,  (0,x_0) \leq f(2_1) < (0,x_{\infty}) \text{ and } (2,x_{0}) \leq f(1_0) \leq f(3_1) < (2,x_{\infty})\}.
\]
Here $f(2_1) < (0,x_{\infty})$ because $\epsilon(2_1) \neq 0$.
\end{enumerate}
Thus we have shown that
\[
\A(P) = \A(0_1 0_2 2_1 1_0 3_1 0_3) \sqcup \A(0_1 2_1 0_2 1_0 3_1 0_3) \sqcup \A(2_1 0_1 0_2 1_0 3_1 0_3) = \coprod_{w \in \Lin(P)} \A(w).
\]
Next we examine the sets $\A(w)$ for $w\in \Lin(P)$ and illustrate the bijection between $\A(w)$ and $\coprod_{\pi \in \collin(P(w))} \A(\pi)$.
It is easy to see the bijection between the two sets
\begin{align*}
\A(0_1 0_2 2_1 1_0 3_1 0_3) &=  \{f \, : \, (2,x_{0}) \leq f(2_1) < f(1_0) \leq f(3_1) < (2,x_{\infty}) \}
\intertext{and}
\A(2_3 1_2 3_3) &=  \{f \, : \, (0,x_{0}) \leq f(2_3) < f(1_2) \leq f(3_3) < (0,x_{\infty}) \}.
\end{align*}
If we define $\coprod_{\pi \in \shuffle{2_0,1_23_3}} \A(\pi)$ to be
\[
\{f \, : \, (0,x_0) \leq f(2_0) \leq (0,x_{\infty}) \text{ and } (0,x_{0}) \leq f(1_2) \leq f(3_3) < (0,x_{\infty}) \},
\]
we see the bijection
\[
\A(0_1 2_1 0_2 1_0 3_1 0_3) \leftrightarrow \A(1_22_03_3) \sqcup \A(1_23_32_0) \sqcup \A(2_01_23_3).
\]
Note that every $f\in \A(0_1 2_1 0_2 1_0 3_1 0_3)$ with $f(2_1) = (1,x_{\infty})$ is mapped to an element of $\A(1_23_32_0)$.
Finally, the bijection
\[
\A(2_1 0_1 0_2 1_0 3_1 0_3) \leftrightarrow \A(1_22_13_3) \sqcup \A(1_23_32_1) \sqcup \A(2_11_23_3)
\]
is analogous.
\end{ex}

The previous notation can be cumbersome and we will not need such generality to prove the existence of the colored Eulerian descent algebra.
Thus, for the rest of the paper, we set $X = [0,j]$ and define the order polynomial $\Omega_{P} (j)$ to be the number of colored $P$-partitions with parts in $\rversion{[0,j]}$.
Hence $\Omega_{P} (j) = | \A(P) |$.
Also define $\Omega_{\pi}(j)$ to be the number of colored $\pi$-partitions with parts in $\rversion{[0,j]}$.
We then have the following corollary of the Fundamental Theorem of Colored $P$-partitions.

\begin{cor}
\label{cor: order poly summation}
\[
\Omega_{P} (j) = \sum_{\pi \in \collin(P)} \Omega_{\pi} (j).
\]
\end{cor}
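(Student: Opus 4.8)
The plan is to obtain this as an immediate consequence of the Fundamental Theorem of Colored $P$-partitions by specializing the indexing set $X$. The FTCPP is proved for an arbitrary totally ordered countable set $X$ with a maximal element, so I would take $X = [0,j]$, whose maximal element is $x_{\infty} = j$. With this choice a colored $P$-partition with parts in $\rversion{[0,j]}$ is exactly an element of $\A(P)$, so $\Omega_{P}(j) = |\A(P)|$ by definition, and likewise $\Omega_{\pi}(j) = |\A(\pi)|$ for every $\pi \in \colpermrn$.

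Next I would invoke the bijection
\[
\A(P) \leftrightarrow \coprod_{\pi \in \collin(P)} \A(\pi)
\]
furnished by the FTCPP. Since $P$ is a finite colored poset, $\Lin(P)$ is finite and each shuffle set $\shuffle{\pi_0,\ldots,\pi_{r-1}}$ is finite, so $\collin(P)$ is a finite set; for $X = [0,j]$ each $\A(\pi)$ is finite as well. Taking cardinalities on both sides and using that the right-hand side is a disjoint union then gives
\[
\Omega_{P}(j) = |\A(P)| = \sum_{\pi \in \collin(P)} |\A(\pi)| = \sum_{\pi \in \collin(P)} \Omega_{\pi}(j),
\]
which is the claimed identity.

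I do not anticipate any real obstacle here; the one point worth a sentence is to confirm that the FTCPP bijection really is available for the finite set $X = [0,j]$, i.e.\ that no step of its proof used the infinitude of $X$. This is clear: the first stage, $\A(P) = \coprod_{w \in \Lin(P)} \A(w)$, merely partitions the same collection of functions into $X_{(r)}$ according to which refinement of $P$ they satisfy and is insensitive to $|X|$, while the second stage only relabels colors and records absolute values, sending functions with parts in $[0,j]_{(r)}$ to functions with parts in $[0,j]_{(r)}$. Hence the corollary follows formally from the theorem once $X$ is specialized, and I would record it in a couple of lines.
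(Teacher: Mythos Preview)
Your proposal is correct and matches the paper's approach exactly: the paper specializes to $X=[0,j]$, notes $\Omega_P(j)=|\A(P)|$ and $\Omega_\pi(j)=|\A(\pi)|$, and records the identity as an immediate corollary of the FTCPP by taking cardinalities across the bijection. Your extra sentence checking that the FTCPP argument is insensitive to $|X|$ is a reasonable sanity check but not something the paper dwells on.
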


Corollary~\ref{cor: order poly summation} allows us to count colored $P$-partitions by instead counting the much simpler colored $\pi$-partitions.
We know that $\Omega_\pi (j)$ counts colored $\pi$-partitions $f$ with
\[
0_0 \leq f(\pi(1)) \leq f(\pi(2)) \leq \cdots \leq f(\pi(n)) \leq j_0,
\]
where $f(\pi(i)) < f(\pi(i+1))$ if $\pi(i) > \pi(i+1)$.
Note that $f(\pi(n)) < j_0$ if $n \in \Des(\pi)$.
Using a well-known technique, there exists a bijection between these sequences and sequences for which every inequality is weak.
We change strict inequalities to weak inequalities by subtracting $1$ from all terms to the right of the strict inequality.
Doing this for every strict inequality, we can compute $\Omega_{\pi}(j)$ by instead counting the number of sequences $(s_1, s_2, \ldots, s_n)$ where
\[
0_0 \leq s_1 \leq s_2 \leq \cdots \leq s_n \leq j_0 - \des(\pi).
\]
Hence
\begin{equation}\label{eq: colored order poly}
\Omega_{\pi}(j) = \mchoose{j-\des(\pi)+1}{n} = \binom{j+n-\des(\pi)}{n},
\end{equation}
where $\mchoose{a}{b}$ denotes the ``multichoose'' function, i.e., the number of ways to choose $b$ elements from a set of size $a$ allowing repetition.
Hence we have
\[
\sum_{j \geq 0} \Omega_{\pi}(j) t^j = \sum_{j \geq 0} \binom{j+n-\des(\pi)}{n} t^j = t^{\des(\pi)} \sum_{j \geq 0} \binom{j+n}{n} t^j = \frac{t^{\des(\pi)}}{(1-t)^{n+1}}.
\]
Combining this result with Corollary~\ref{cor: order poly summation} tells us that
\[
\sum_{j \geq 0} \Omega_{P}(j)  t^j = \frac{\dsum_{\pi \in \collin(P)} t^{\des(\pi)}}{(1-t)^{n+1}}.
\]

Continuing towards the goal of being able to compute $\Omega_{P}(j)$ for colored posets of various forms, let $P_1 \sqcup P_2$ denote the disjoint union of the two colored posets $P_1$ and $P_2$.
If $\{|a| \, : \, a \in P_1, |a| \neq 0 \}$ and $\{|b| \, : \, b \in P_2, |b| \neq 0 \}$ are disjoint, then the map which sends a colored $(P_1 \sqcup P_2)$-partition $f$ to the ordered pair $(g,h)$ where $g = f|_{P_1}$ and $h = f|_{P_2}$ gives us the following theorem.

\begin{thm}
There is a bijection between the set $\mathcal{A}(P_1 \sqcup P_2)$ and the cartesian product $\mathcal{A}(P_1) \times \mathcal{A}(P_2)$.
That is, $\mathcal{A}(P_1 \sqcup P_2) \leftrightarrow \mathcal{A}(P_1) \times \mathcal{A}(P_2)$.
\end{thm}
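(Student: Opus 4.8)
The plan is to prove directly that the assignment $\Phi\colon f \mapsto (f|_{P_1}, f|_{P_2})$ indicated just before the statement is a bijection, its inverse being the operation of gluing a pair of colored $P_i$-partitions along their common chain $0_1 \pless \cdots \pless 0_{r-1}$. Everything rests on a structural description of $P_1 \sqcup P_2$. Since the nonzero elements of $P_1$ and those of $P_2$ have disjoint absolute values while both posets contain the full chain of the $0_k$, the relations of $P_1 \sqcup P_2$ that are not already relations of $P_1$ or of $P_2$ are exactly the ``cross relations'' between a nonzero element of one $P_i$ and a nonzero element of the other. I would record this first: any relation $a \pless b$ in $P_1 \sqcup P_2$ is witnessed by a chain from $a$ to $b$ whose consecutive steps lie in $P_1$ or in $P_2$; if $a$ and $b$ are nonzero and belong to different $P_i$, that chain must meet $\{0_1, \dots, 0_{r-1}\}$, and examining the first $0_{k_1}$ reached from $a$ within $P_1$ and the last $0_{k_2}$ occurring before $b$ within $P_2$ yields $a \pless_{P_1} 0_{k_1}$, $0_{k_2} \pless_{P_2} b$, and $k_1 \le k_2$. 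In particular each inclusion $P_i \hookrightarrow P_1 \sqcup P_2$ both preserves and reflects the order.

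Granting this, well-definedness and injectivity of $\Phi$ are immediate: because $P_i \hookrightarrow P_1 \sqcup P_2$ preserves and reflects $\pless$, each of conditions (i)--(iv) for an $f \in \A(P_1 \sqcup P_2)$ restricts to the corresponding condition for $f|_{P_i}$, so $f|_{P_i} \in \A(P_i)$; and since the ground set of $P_1 \sqcup P_2$ is $P_1 \cup P_2$, the function $f$ is recovered from the pair $(f|_{P_1}, f|_{P_2})$.

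The real content is surjectivity, and this is the step I expect to be the main obstacle. Given $(g, h) \in \A(P_1) \times \A(P_2)$, I would define $f$ on $P_1 \sqcup P_2$ by $f|_{P_1} = g$ and $f|_{P_2} = h$; this is consistent on the overlap $\{0_1, \dots, 0_{r-1}\}$ since condition (i) forces $g(0_k) = (k, x_0) = h(0_k)$. Conditions (i) and (iv) for $f$ hold because they constrain single elements, and conditions (ii)--(iii) hold for any relation lying entirely inside $P_1$ or inside $P_2$ since $g \in \A(P_1)$ and $h \in \A(P_2)$. What remains is to verify (ii)--(iii) for a cross relation $a \pless b$, say with $a$ from $P_1$ and $b$ from $P_2$. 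Fix $k_1 \le k_2$ with $a \pless_{P_1} 0_{k_1}$ and $0_{k_2} \pless_{P_2} b$ from the structural fact above. Because $|a| \ne 0$ one has $|a|_{\epsilon(a)-k_1} > 0_0 = |0_{k_1}|_{\epsilon(0_{k_1})-k_1}$, so condition (iii) for $g$ applied to the pair $(a, 0_{k_1})$ excludes $g(a) = (k_1, x_0)$; together with $g(a) \le g(0_{k_1}) = (k_1, x_0)$ this forces $f(a) = g(a)$ to lie in a color block $X_m$ with $m < k_1$. Dually, $f(b) = h(b) \ge h(0_{k_2}) = (k_2, x_0)$ puts $f(b)$ in a block $X_{m'}$ with $m' \ge k_2 \ge k_1 > m$. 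Hence $f(a)$ and $f(b)$ lie in distinct color blocks with $f(a) < f(b)$, so condition (ii) holds and condition (iii) is vacuous for $(a,b)$ because its hypothesis demands that $f(a)$ and $f(b)$ share a block.

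This shows $f \in \A(P_1 \sqcup P_2)$, and $\Phi(f) = (g,h)$ by construction, so $\Phi$ is surjective; the two constructions are visibly mutually inverse, which completes the argument. The one delicate point is the cross-relation case, where one must use the ``shifted'' comparison of condition (iii) rather than the order coming from one-line notation — and it is precisely the factorization through the $0$-chain that makes the shifted comparison force the two images into separate color blocks.
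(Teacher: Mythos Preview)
Your proof is correct and follows exactly the approach the paper indicates: the paper's entire argument is the single sentence preceding the theorem, naming the restriction map $f\mapsto(f|_{P_1},f|_{P_2})$ and asserting it gives the bijection. You have supplied the details the paper omits---in particular the verification that cross relations $a\pless b$ (those forced by transitivity through the shared chain $0_1\pless\cdots\pless 0_{r-1}$) are automatically respected by the glued map because condition~(iii) forces $f(a)$ and $f(b)$ into distinct color blocks---so your treatment is more thorough than the paper's but not different in substance.
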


\begin{cor}
\label{cor: order poly product}
\[
\Omega_{P_1 \sqcup P_2} (j) = \Omega_{P_1}(j) \cdot \Omega_{P_2}(j).
\]
\end{cor}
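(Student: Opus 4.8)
The plan is to deduce this directly from the bijection $\A(P_1 \sqcup P_2) \leftrightarrow \A(P_1) \times \A(P_2)$ established in the preceding theorem, together with the fact that, once we specialize $X = [0,j]$, the order polynomial merely records a cardinality: $\Omega_P(j) = |\A(P)|$. So first I would fix $X = [0,j]$, so that $\A(P)$ means the set of colored $P$-partitions with parts in $\rversion{[0,j]}$ and $\Omega_P(j) = |\A(P)|$ by definition. Then I would invoke the theorem: under the hypothesis that $\{\,|a| : a \in P_1,\ |a| \neq 0\,\}$ and $\{\,|b| : b \in P_2,\ |b| \neq 0\,\}$ are disjoint, the restriction map $f \mapsto (f|_{P_1}, f|_{P_2})$ is a bijection from $\A(P_1 \sqcup P_2)$ onto $\A(P_1) \times \A(P_2)$, with inverse given by gluing.

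The one point that genuinely warrants a remark is that this bijection is compatible with the part bound $\rversion{[0,j]}$ in both directions, so that restricting to parts in $\rversion{[0,j]}$ on the left corresponds to restricting to such parts on each factor on the right. This is immediate: the set of values taken by $f$ is the union of the sets of values taken by $f|_{P_1}$ and $f|_{P_2}$, so $f$ has all parts in $\rversion{[0,j]}$ if and only if both $g = f|_{P_1}$ and $h = f|_{P_2}$ do; conversely, any pair $(g,h) \in \A(P_1) \times \A(P_2)$ with parts in $\rversion{[0,j]}$ glues back to an $f$ with parts in $\rversion{[0,j]}$, the gluing being well-defined precisely because conditions (i)–(iv) are checked on elements and comparable pairs, both of which lie entirely within one of the two summands (the relations of $P_1 \sqcup P_2$ are the union of those of $P_1$ and $P_2$, apart from the shared $0_k$-chain, on which $g$ and $h$ agree by (i)). Hence the theorem's bijection restricts to a bijection between the finite set counted by $\Omega_{P_1 \sqcup P_2}(j)$ and the set counted by $\Omega_{P_1}(j)\cdot\Omega_{P_2}(j)$.

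Finally I would conclude by taking cardinalities: $\Omega_{P_1 \sqcup P_2}(j) = |\A(P_1 \sqcup P_2)| = |\A(P_1) \times \A(P_2)| = |\A(P_1)|\cdot|\A(P_2)| = \Omega_{P_1}(j)\cdot\Omega_{P_2}(j)$. There is no hard step here — the corollary is a formal consequence of the theorem — so the only things to be careful about are the compatibility with the bound $\rversion{[0,j]}$ noted above and stating the disjointness hypothesis (inherited from the theorem) so that the gluing map is well-defined.
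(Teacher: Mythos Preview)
Your proposal is correct and matches the paper's approach: the corollary is stated without proof in the paper because it follows immediately from the preceding theorem by taking cardinalities after specializing to $X = [0,j]$, which is exactly what you do. Your extra paragraph on compatibility with the part bound is sound but unnecessary here, since once $X = [0,j]$ is fixed the theorem's restriction map already lives in that codomain automatically.
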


Certain colored posets yield nice results about the colored permutation groups $\colpermrn$.
If $P$ is the disjoint union of the singleton element $a_0$ for $a \in \pos$ and the chain $0_1 \pless 0_2 \pless \cdots \pless 0_{r-1}$, then $\collin(P) = \{a_k \, : \, k \in [0,r-1]\}$.
Also, $\Omega_P(j) = j+1 + (r-1)j  = rj+1$ since $\Omega_P(j)$ counts maps from $a_0$ to the set
\[
0_0 < \cdots < j_0  < 0_1 < \cdots < (j-1)_1 < \cdots < 0_{r-1} < \cdots < (j-1)_{r-1}.
\]
Note that $\collin(P)$ and $\Omega_P(j)$ are invariant with respect to the choice of color for $a\in \pos$.

If $P$ is the disjoint union of the antichain on $1_0, 2_0, \ldots, n_0$ together with the chain $0_1 \pless 0_2 \pless \cdots \pless 0_{r-1}$, then $\collin(P) = \colpermrn$.
Furthermore, by Corollary~\ref{cor: order poly product}, we know that $\Omega_P(j) = (rj+1)^n$.
This gives an alternate proof of the following theorem of Steingr{\'{\i}}msson \cite[Theorem 17]{Steingrimsson1994}.

\begin{thm}[Steingr{\'{\i}}msson]\label{Col Eul Poly}
\[
\sum_{j \geq 0} (rj+1)^n t^j = \frac{\dsum_{\pi \in \colpermrn} t^{\des(\pi)}}{(1-t)^{n+1}}.
\]
\end{thm}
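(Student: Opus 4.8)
The plan is to apply the $P$-partition machinery of the preceding sections to a single, very simple colored poset and then read the identity off directly. Specifically, I would take $P$ to be the colored poset obtained as the disjoint union of the antichain on $1_0, 2_0, \ldots, n_0$ with the chain $0_1 \pless 0_2 \pless \cdots \pless 0_{r-1}$ --- exactly the poset isolated in the paragraph immediately before the statement. The whole argument then rests on two facts about this $P$, both essentially recorded above: that $\collin(P) = \colpermrn$, and that $\Omega_P(j) = (rj+1)^n$.

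First I would justify (or simply recall) why $\collin(P) = \colpermrn$. Since the only comparabilities in $P$ lie inside the forced chain $0_1 \pless \cdots \pless 0_{r-1}$ and the letters $1_0, \ldots, n_0$ form an antichain, $\Lin(P)$ is the set of all shuffles of a color-$0$ permutation of $\{1_0, \ldots, n_0\}$ with the word $0_1 \cdots 0_{r-1}$; thus an element of $\Lin(P)$ amounts to a weak composition $(c_0, \ldots, c_{r-1})$ of $n$ together with an ordered set partition of $[n]$ into blocks of those sizes. Forming colored linear extensions shifts the $i$-th block down by $i$ in color and shuffles the resulting monochromatic words, and one checks this produces each $\tau \in \colpermrn$ exactly once, the inverse recipe being to read the blocks off $\tau$ by collecting its letters of each color. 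Hence $\collin(P) = \coprod_{w \in \Lin(P)} \collin(w) = \colpermrn$.

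Next I would compute $\Omega_P(j)$. Writing $P = P_1 \sqcup \cdots \sqcup P_n$, where $P_i$ is the disjoint union of the singleton $i_0$ with the chain $0_1 \pless \cdots \pless 0_{r-1}$, iterating Corollary~\ref{cor: order poly product} gives $\Omega_P(j) = \prod_{i=1}^{n} \Omega_{P_i}(j)$. Each factor equals $rj+1$, since a colored $P_i$-partition is just a choice of image for $i_0$ in the totally ordered $(rj+1)$-element set $0_0 < \cdots < j_0 < 0_1 < \cdots < (j-1)_1 < \cdots < (j-1)_{r-1}$. Therefore $\Omega_P(j) = (rj+1)^n$. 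Then I would invoke the generating-function identity obtained by combining Corollary~\ref{cor: order poly summation} with \eqref{eq: colored order poly}, namely
\[
\sum_{j \geq 0} \Omega_P(j)\, t^j = \frac{\sum_{\pi \in \collin(P)} t^{\des(\pi)}}{(1-t)^{n+1}},
\]
and substitute the two computations: $\Omega_P(j) = (rj+1)^n$ on the left and $\collin(P) = \colpermrn$ on the right. This is precisely the claimed identity. I expect the only load-bearing point to be the identification $\collin(P) = \colpermrn$ --- in effect, that decomposing a colored permutation into its monochromatic runs, recoloring, and reassembling is a bijection onto $\coprod_{w \in \Lin(P)} \collin(w)$ with no overcounting --- but since $\collin(P)$ is by construction that disjoint union, this reduces to the elementary observation that the color data of $\tau$ determines the block $w \in \Lin(P)$ it arose from.
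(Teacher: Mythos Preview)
Your proposal is correct and follows essentially the same route as the paper: choose $P$ to be the antichain on $1_0,\ldots,n_0$ adjoined to the required chain, compute $\Omega_P(j)=(rj+1)^n$ via Corollary~\ref{cor: order poly product} from the singleton case, identify $\collin(P)=\colpermrn$, and read off the identity from the displayed generating-function formula for $\sum_{j\ge 0}\Omega_P(j)t^j$. The only difference is that you spell out the bijection behind $\collin(P)=\colpermrn$, whereas the paper simply asserts it; one small imprecision in your sketch is that an element of $\Lin(P)$ records a \emph{word} on each block (not merely an unordered block as the phrase ``ordered set partition'' suggests), but this does not affect the argument.
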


To prove the existence of the colored Eulerian descent algebra, we will need to better understand the following colored poset.
Let $P(\pi)$ denote the disjoint union of the chains
\[
0_1 \pless 0_2 \pless \cdots \pless 0_{r-1} \quad \text{and} \quad \pi(1) \pless \pi(2) \pless \cdots \pless \pi(n).
\]
At first glance it seems difficult to compute $\Omega_{P(\pi)}(j)$.
However, if $\pi$ is monochromatic, i.e., if $\epsilon(\pi(1)) = \cdots = \epsilon(\pi(n))$, then $\Omega_{P(\pi)}(j)$ is easily computable and we have the following lemma.

\begin{lem}\label{lem: mono order poly}
If $\pi \in \colpermrn$ is monochromatic, then
\[
\Omega_{P(\pi)}(j) = \binom{rj+n-\intdes(\pi)}{n}.
\]
\end{lem}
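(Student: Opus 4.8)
The plan is to compute $\Omega_{P(\pi)}(j)$ by enumerating colored $P(\pi)$-partitions directly and recognizing the count as $\mchoose{rj-\intdes(\pi)+1}{n}=\binom{rj+n-\intdes(\pi)}{n}$, which is the number of weakly increasing length-$n$ sequences drawn from a chain with $rj-\intdes(\pi)+1$ elements. First I would note that in any colored $P(\pi)$-partition $f$ the values $f(0_k)=(k,0)$ are forced by condition~(i) and the two chains of $P(\pi)$ lie in distinct components, so such an $f$ is the same datum as a function on the single chain $\pi(1)\pless\cdots\pless\pi(n)$, say $v_i=f(\pi(i))$, taking values in $[0,j]_{(r)}$ and satisfying conditions~(ii)--(iv). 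Since $\pi$ is monochromatic, say of common color $c$, the shifted comparison appearing in~(iii) between $|\pi(i)|_{c-k}$ and $|\pi(i+1)|_{c-k}$ reduces to the comparison of $|\pi(i)|$ with $|\pi(i+1)|$, so those three conditions say exactly: $v_1\le v_2\le\cdots\le v_n$; no $v_i$ equals a top element $(k,j)$ with $k\ne c$ (condition~(iv)); and $v_i<v_{i+1}$ whenever $i\in\intDes(\pi)$ and $v_i,v_{i+1}$ lie in a common block $X_k$ (condition~(iii)). I would also check that the instances of~(ii) and~(iii) for non-consecutive pairs of the chain are implied by the consecutive ones, since whenever $i<i'$ with $|\pi(i)|>|\pi(i')|$ there is an internal descent of $\pi$ at some position $m$ with $i\le m<i'$.

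Next I would delete the forbidden values. The subset $Q=[0,j]_{(r)}\setminus\{(k,j):0\le k\le r-1,\ k\ne c\}$ is a chain with $(r-1)j+(j+1)=rj+1$ elements, and its color blocks $X_k\cap Q$ cut it into $r$ consecutive intervals (of size $j$ for $k\ne c$ and of size $j+1$ for $k=c$). Fixing the order isomorphism $Q\cong\{0,1,\dots,rj\}$, condition~(iv) says precisely that every $v_i$ lies in $Q$, so a colored $P(\pi)$-partition is the same datum as a weakly increasing sequence $0\le v_1\le\cdots\le v_n\le rj$ in which $v_i<v_{i+1}$ whenever $i\in\intDes(\pi)$ and $v_i,v_{i+1}$ lie in the same one of these $r$ intervals.

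The heart of the argument is the familiar ``subtract past the strict steps'' bijection. I would send $(v_1,\dots,v_n)$ to $(s_1,\dots,s_n)$ defined by
\[
s_i=v_i-\#\{\,m\in\intDes(\pi):m<i\,\}.
\]
What makes this work despite the block-boundary subtlety is that whenever $i\in\intDes(\pi)$ one has $v_i<v_{i+1}$ \emph{unconditionally}: if $v_i,v_{i+1}$ lie in the same interval this is forced by condition~(iii), and if they lie in different intervals it is automatic since the intervals are linearly ordered. Hence $(s_i)$ is weakly increasing with $s_1\ge 0$ and $s_n\le rj-\intdes(\pi)$, i.e. a weakly increasing sequence in $\{0,1,\dots,rj-\intdes(\pi)\}$. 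Conversely, adding $\#\{m\in\intDes(\pi):m<i\}$ back to a weakly increasing sequence in that chain returns a weakly increasing sequence in $\{0,\dots,rj\}$ that is strictly increasing at each $i\in\intDes(\pi)$; this is a legitimate colored $P(\pi)$-partition because the extra strictness it carries across interval boundaries is harmless---condition~(iii) is only a one-sided requirement---and condition~(iv) holds since every term lies in $Q$. The two maps are mutually inverse, so $\Omega_{P(\pi)}(j)=\mchoose{rj-\intdes(\pi)+1}{n}=\binom{rj+n-\intdes(\pi)}{n}$, as claimed.

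I expect the only genuine obstacle to be the verification sketched in the third paragraph: confirming that the standard subtraction trick is still a bijection even though the strict-inequality requirement in~(iii) only bites \emph{within} a color block, and pinning down that $Q$ is exactly $[0,j]_{(r)}$ with its $r-1$ forbidden top elements removed---so that $|Q|=rj+1$ and the ``$rj$'' appears in the binomial coefficient. Once one sees that an internal descent of $\pi$ always forces $v_i<v_{i+1}$ regardless of blocks, and that the inverse map cannot leave $Q$, everything else is routine bookkeeping.
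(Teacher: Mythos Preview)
Your argument is correct and follows essentially the same approach as the paper's proof: remove the forbidden top elements $(k,j)$ with $k\neq c$ to obtain a chain of size $rj+1$, observe that in the monochromatic case condition~(iii) reduces to comparing absolute values, and apply the standard multichoose/subtraction count. You are simply more explicit than the paper about why the block-boundary issue is harmless and why non-consecutive comparable pairs are handled automatically.
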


\begin{proof}
Suppose $\pi$ is monochromatic, and let $f\in \mathcal{A}(P(\pi))$.
The relationship between $f(\pi(i))$ and $f(\pi(i+1))$ in $X_k$ is the same for all $k=0,\ldots,r-1$ because $\epsilon(\pi(i))-k = \epsilon(\pi(i+1))-k$ for all values of $k$.
For notational ease, let $\epsilon(\pi) = \epsilon(\pi(1))$.
We also have $f(\pi(i)) \neq j_k$ for $k \neq \epsilon(\pi)$.
Thus $\Omega_{P(\pi)}(j)$ counts maps, $f$, from $P(\pi)$ to the totally ordered set
\[
0_0 < \cdots < (j-1)_0  < \cdots < 0_{\epsilon(\pi)} < \cdots < j_{\epsilon(\pi)} < \cdots < 0_{r-1} < \cdots < (j-1)_{r-1}
\]
such that $f(\pi(i)) \leq f(\pi(i+1))$ and $f(\pi(i)) < f(\pi(i+1))$ if $\pi(i) > \pi(i+1)$.
This image set has size $j+1+ (r-1)j = rj+1$, and a descent at position $n$ does not affect $\Omega_{P(\pi)} (j)$.
Hence
\[
\Omega_{P(\pi)}(j) = \mchoose{rj+1-\intdes(\pi)}{n} = \binom{rj+n-\intdes(\pi)}{n}.
\]
\end{proof}

As it turns out, the previous lemma holds for all $\pi \in \colpermrn$, but to prove this we need an intermediate lemma.
First, we introduce some new notation.
If $\pi$ is not monochromatic, then let $i$ be the largest value such that $\epsilon(\pi(1)) = \cdots = \epsilon(\pi(i))$.
Thus $i$ is the length of the monochromatic run beginning with $\pi(1)$.
Let $b=\epsilon(\pi(i+1))$ and define a new permutation $\rho(\pi)$ by setting
\[
\rho(\pi)(s) = \left\{\begin{array}{cl}|\pi(s)|_b, & \text{if } 1\leq s \leq i \\ \pi(s), & \text{if } i < s \leq n. \end{array}\right.
\]
The map $\rho$ simply shifts the color of each of the letters in the initial monochromatic run to $b$ and increases the length of the run.
Note that doing so may or may not change the number of descents.

Next we define a useful family of bijections.
For $a=1,2,\ldots,r-1$, we define $\phi_a: [0,j]_{(r)} \rightarrow [0,j]_{(r)}$ by
\[
\phi_a(x_c) = \left\{\begin{array}{rl} x_c, & \text{if } x_c < j_{a-1} \\ 0_{a}, & \text{if } x_c = j_{a-1}  \\ (x+1)_{a}, &  \text{if } 0_a \leq x_c < j_a \\ j_{a-1}, & \text{if } x_c = j_{a} \\ x_c, & \text{if } x_c > j_{a} . \end{array}\right.
\]
A general image of one of these bijections is given in Figure~\ref{fig: MonoRunShiftForward}.
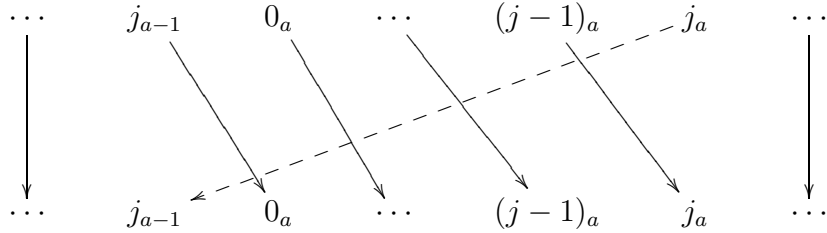
\begin{figure}[htbp]
\begin{center}
\[\xymatrix{ \cdots \ar@{->}[dd] & j_{a-1} \ar@{->}[ddr] & 0_{a} \ar@{->}[ddr] & \cdots \ar@{->}[ddr]  & (j-1)_{a} \ar@{->}[ddr]  & j_a \ar@{-->}[ddllll]  & \cdots \ar@{->}[dd] \\ &&&&&& \\ \cdots & j_{a-1} & 0_{a} & \cdots & (j-1)_{a} & j_{a} & \cdots }\] 
\caption{The bijection $\phi_{a}: [0,j]_{(r)} \rightarrow [0,j]_{(r)}$. \label{fig: MonoRunShiftForward}}
\end{center}
\end{figure}
 
\begin{lem}\label{lem: MonoRunShift}
If $\des(\pi) = \des(\rho(\pi))$, then $\Omega_{P(\pi)}(j) = \Omega_{P(\rho(\pi))}(j)$.
\end{lem}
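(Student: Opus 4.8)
The plan is to construct an explicit bijection between $\A(P(\pi))$ and $\A(P(\rho(\pi)))$ using the family of maps $\phi_a$ introduced just before the lemma. The only structural difference between $P(\pi)$ and $P(\rho(\pi))$ is that the letters in the initial monochromatic run (positions $1$ through $i$) have had their color shifted from $\epsilon(\pi(1))$ to $b = \epsilon(\pi(i+1))$, and the run has been extended to include position $i+1$. So given $f \in \A(P(\pi))$, the natural candidate for the image $\tilde f \in \A(P(\rho(\pi)))$ is: leave $f$ unchanged on positions $i+1, \dots, n$ and on the chain $0_1 \pless \cdots \pless 0_{r-1}$, and on positions $1, \dots, i$ replace $f(\pi(s))$ by applying an appropriate composition of the $\phi_a$'s (for $a$ ranging over the colors between $\epsilon(\pi)$ and $b$, or rather the sequence of single-step color-shifts that moves the run forward one color at a time) to $f(\pi(s))$. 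First I would reduce to the case where $b = \epsilon(\pi) + 1$: a general color shift is a composition of one-step shifts, and since each intermediate permutation has the same descent number as $\pi$ exactly when the hypothesis $\des(\pi) = \des(\rho(\pi))$ holds (here one must be a little careful — I would instead argue that it suffices to handle the single application of $\phi_a$ with $a = \epsilon(\pi)+1$ and iterate, checking at each stage that the descent-number hypothesis is what licenses the step), we can assume the run shifts by exactly one color.

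Next I would verify that $\tilde f$ actually lies in $\A(P(\rho(\pi)))$, i.e., that conditions (i)–(iv) of the definition of colored $P$-partition are preserved. Condition (i) is immediate since the $0_k$ values are untouched. For (iv), the point is that $\phi_a$ maps $j_{a-1}$ (the maximal element of color block $a-1$ under the shift) appropriately and respects which block the maximal element $x_\infty$ — here $j_0$, the top of block $0$ in $[0,j]_{(r)}$ relative to each color — sits in; this is exactly why the definition of $\phi_a$ swaps $j_{a-1}$ and $j_a$. The heart of the verification is condition (ii) together with the strict-versus-weak rule (iii): I need to check that $f(\pi(s)) \leq f(\pi(s+1))$ (resp. $<$) in $P(\pi)$ translates into $\tilde f(\rho(\pi)(s)) \leq \tilde f(\rho(\pi)(s+1))$ (resp. $<$) in $P(\rho(\pi))$. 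For the pairs entirely inside $1, \dots, i$ this follows because $\phi_a$ is order-preserving on each color block it touches and the shifted relationship $|\pi(s)|_{\epsilon(\pi(s))-k}$ versus $|\pi(s+1)|_{\epsilon(\pi(s+1))-k}$ is unchanged within a monochromatic run (as observed in the proof of Lemma~\ref{lem: mono order poly}). The delicate pair is $(\pi(i), \pi(i+1))$: after the shift, position $i+1$ joins the run, so the comparison rule for this pair changes from "color $\epsilon(\pi)$ vs.\ color $b$" to "both color $b$." This is precisely where the hypothesis $\des(\pi) = \des(\rho(\pi))$ is used: a descent (or non-descent) at position $i$ is created or destroyed exactly when this hypothesis fails, so the hypothesis guarantees the weak/strict status of the inequality at position $i$ is consistent before and after, and hence that $\phi_a$ maps the allowed region for $f$ bijectively onto the allowed region for $\tilde f$.

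Finally I would note that $\phi_a$ is a bijection of $[0,j]_{(r)}$ (this is clear from Figure~\ref{fig: MonoRunShiftForward}: it is the identity outside blocks $a-1$ and $a$, and inside it cyclically rotates), so the map $f \mapsto \tilde f$ has an obvious two-sided inverse obtained by applying $\phi_a^{-1}$ to the run — and one checks symmetrically that this inverse lands in $\A(P(\pi))$, again using $\des(\rho(\pi)) = \des(\pi)$. Hence $|\A(P(\pi))| = |\A(P(\rho(\pi)))|$, which is the statement $\Omega_{P(\pi)}(j) = \Omega_{P(\rho(\pi))}(j)$. I expect the main obstacle to be the bookkeeping in the verification of (ii) and (iii) for the boundary pair $(\pi(i),\pi(i+1))$ — specifically, organizing the case analysis (is position $i$ a descent in $\pi$? in $\rho(\pi)$? what are $\epsilon(\pi)$, $b$, and the block $k$ into which $f$ sends these letters?) so that it transparently reduces to the single hypothesis on descent numbers, mirroring the case tables \eqref{eq: FTCPP less than case 1}–\eqref{eq: FTCPP greater than case 3} from the proof of the FTCPP.
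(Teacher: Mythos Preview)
Your plan is essentially the paper's own proof: shift the color of the initial monochromatic run one step at a time using the bijections $\phi_a$, verify conditions (i)--(iv) with the boundary pair $(\pi(i),\pi(i+1))$ as the only delicate spot, and invoke $\des(\pi)=\des(\rho(\pi))$ exactly there. The one thing you have not made explicit is the case $a=\epsilon(\pi(i))>b=\epsilon(\pi(i+1))$, where the paper shifts \emph{down} by applying $\phi_a^{-1}$ rather than $\phi_{a+1}$; your framework handles it symmetrically, but you should state both directions.
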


\begin{proof}
This proof is notationally unpleasant but is really just a careful description of a bijection between the sets $\A(P(\pi))$ and $\A(P(\rho(\pi)))$.
This bijection is obtained through the $\phi_a$ maps defined earlier.
Suppose that the length of the initial monochromatic run is $i$ and let $a = \epsilon(\pi(i))$ and $b = \epsilon(\pi(i+1))$.

Case 1:
Assume $a < b$.
Define a new colored permutation $\sigma$ by setting $\sigma(s) = |\pi(s)|_{a+1}$ for $1 \leq s \leq i$ and $\sigma(s) = \pi(s)$ for $i < s \leq n$.
If $\des(\pi) = \des(\sigma)$, then we define a bijection between colored $P(\pi)$-partitions and colored $P(\sigma)$-partitions as follows.
For every colored $P(\pi)$-partition $f$, define a colored $P(\sigma)$-partition $g$ by setting $g(\sigma(s)) = \phi_{a+1}(f(\pi(s)))$ for $1 \leq s \leq i$.
If $i < s \leq n$, then define $g(\sigma(s)) = f(\pi(s))$.
This bijection shows that $\Omega_{P(\pi)}(j) = \Omega_{P(\sigma)}(j)$.
This process can be repeated until $\epsilon(\pi(i))+1 = \epsilon(\pi(i+1))$.
Then, as long as doing so does not change the number of descents, it can be repeated once more so that $\sigma = \rho(\pi)$ and thus $\Omega_{P(\pi)}(j) = \Omega_{P(\rho(\pi))}(j)$.

To see why $\phi_{a+1}$ produces the desired bijection, let $f$ be a colored $P(\pi)$-partition and let $g$ be a colored $P(\sigma)$-partition.
First note that for $s \neq i$, if $\pi(s)$ and $\pi(s+1)$ are mapped to $X_k$, then the relation $f(\pi(s)) \sim f(\pi(s+1))$ is the same as the relation $g(\sigma(s)) \sim g(\sigma(s+1))$ when they are both mapped to $X_k$.
That is, they are either both weak or both strict inequalities.
This is because if $s < i$, then $\epsilon(\pi(s)) = \epsilon(\pi(s+1))$ and $\epsilon(\sigma(s)) = \epsilon(\sigma(s+1))$.
If $s > i$, then $\epsilon(\pi(s)) = \epsilon(\sigma(s))$.
Also, if $s \leq i$, then $f(\pi(s)) \neq j_k$ unless $k=a$.
Similarly, $g(\sigma(s)) \neq j_k$ unless $k = a+1$.
If $s> i$, then (since $\epsilon(\pi(s)) = \epsilon(\sigma(s))$) both $f(\pi(s)) \neq j_k$ and $g(\sigma(s)) \neq j_k$ unless $\epsilon(\pi(s)) = k$.

Next consider $\pi(i)$ and $\pi(i+1)$.
We claim that if  $\pi(i)$ and $\pi(i+1)$ are mapped to $X_k$ for $k\neq a+1$, then the relation $f(\pi(i)) \sim f(\pi(i+1))$ is the same as the relation $g(\sigma(i)) \sim g(\sigma(i+1))$ when they are both mapped to $X_k$.
However, if they are both mapped to $X_{a+1}$, then $f(\pi(i)) < f(\pi(i+1))$ and $g(\sigma(i)) \leq g(\sigma(i+1))$.
Lastly, note that $f(\pi(i)) \neq j_k$ unless $k= a$, and $g(\sigma(i)) \neq j_k$ unless $k = a+1$.

Case 2:
Assume $a > b$.
Define a new permutation $\sigma$ by setting $\sigma(s) = |\pi(s)|_{a-1}$ for $1 \leq s \leq i$ and $\sigma(s) = \pi(s)$ for $i < s \leq n$.
If $\des(\pi) = \des(\sigma)$, then we define a bijection between colored $P(\pi)$-partitions and colored $P(\sigma)$-partitions as follows.
For every colored $P(\pi)$-partition $f$, define a colored $P(\sigma)$-partition $g$ by setting $g(\sigma(s)) = \phi_{a}^{-1} (f(\pi(s)))$ for $1 \leq s \leq i$.
If $i < s \leq n$, then define $g(\sigma(s)) = f(\pi(s))$.
This bijection shows that $\Omega_{P(\pi)}(j) = \Omega_{P(\sigma)}(j)$.
This process can be repeated until $\epsilon(\pi(i))-1 = \epsilon(\pi(i+1))$.
Then, as long as doing so does not change the number of descents, it can be repeated once more so that $\sigma = \rho(\pi)$.
Thus $\Omega_{P(\pi)}(j) = \Omega_{P(\rho(\pi))}(j)$.
The details for Case 2 are similar to those of Case 1 and are omitted.
\end{proof}

\begin{ex}
Let $X=\{0,1,2\}$ and $r=3$.
Figure~\ref{fig: MonoRunShiftExample} shows how the bijection from the previous lemma maps colored $P(\pi)$-partitions with $\pi = 2_0 1_0 3_2$ to colored $P(\rho(\pi))$-partitions with $\rho(\pi) = 2_2 1_2 3_2$ via the composition $\phi_2 \phi_1$.

\begin{figure}[htbp]
\begin{center}
\[\xymatrix{ \ar@/_1pc/[dd]_{\phi_1}   & 0_0 \ar@{->}[dd] & 1_0 \ar@{->}[dd] & 2_0 \ar@{->}[ddr] & 0_1 \ar@{->}[ddr] & 1_1 \ar@{->}[ddr] & \times \ar@{-->}[ddlll] & 0_2 \ar@{->}[dd] & 1_2 \ar@{->}[dd] & \times \ar@{-->}[dd] \\
 & & & & & & & & & \\
 \ar@/_1pc/[dd]_{\phi_2} & 0_0 \ar@{->}[dd] & 1_0 \ar@{->}[dd] & \times \ar@{-->}[dd] & 0_1 \ar@{->}[dd] & 1_1 \ar@{->}[dd] & 2_1 \ar@{->}[ddr] & 0_2 \ar@{->}[ddr] & 1_2 \ar@{->}[ddr] & \times \ar@{-->}[ddlll]  \\
 & & & & & & & & & \\
 & 0_0  & 1_0 & \times & 0_1 & 1_1 & \times & 0_2 & 1_2 & 2_2 }\] 
\caption{The bijection $\phi_2 \phi_1: [0,2]_{(3)} \rightarrow [0,2]_{(3)}$. \label{fig: MonoRunShiftExample}}
\end{center}
\end{figure}
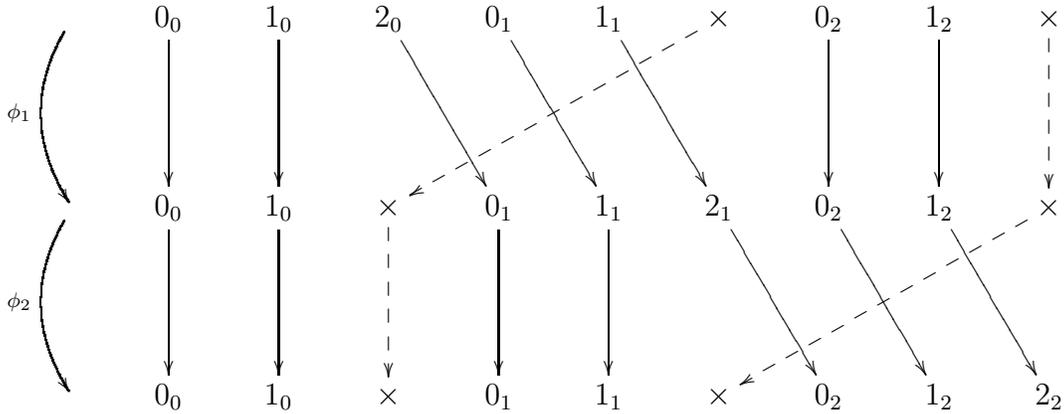

\end{ex}

We are finally ready to compute $\Omega_{P(\pi)}(j)$ for all $\pi \in \colpermrn$.

\begin{thm}\label{thm: P(pi) order poly}
If $\pi \in \colpermrn$, then
\[
\Omega_{P(\pi)}(j) = \binom{rj+n-\intdes(\pi)}{n}.
\]
\end{thm}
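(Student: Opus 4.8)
The plan is to induct on the number $m$ of maximal monochromatic runs of $\pi$. The base case $m=1$ is exactly Lemma~\ref{lem: mono order poly}. For the inductive step I would work with the permutation $\rho(\pi)$: since $\rho$ absorbs the initial monochromatic run into the run that follows it, $\rho(\pi)$ has strictly fewer monochromatic runs than $\pi$, so the inductive hypothesis applies to $\rho(\pi)$. The preliminary observation I would record is that $\pi$ and $\rho(\pi)$ have the same one-line notation except that the colors of positions $1,\dots,i$ change from $a$ to $b$; hence $\Des(\pi)$ and $\Des(\rho(\pi))$ can differ only at position $i$, and since $\pi$ is not monochromatic we have $i<n$, so this position is internal. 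Therefore $\intdes(\rho(\pi))-\intdes(\pi)=\des(\rho(\pi))-\des(\pi)$, and a short check shows this common difference equals $0$ precisely when $a-b$ and $|\pi(i)|-|\pi(i+1)|$ have the same sign, and equals $\pm1$ otherwise.

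If $\des(\pi)=\des(\rho(\pi))$, then Lemma~\ref{lem: MonoRunShift} gives $\Omega_{P(\pi)}(j)=\Omega_{P(\rho(\pi))}(j)$, and by the observation above $\intdes(\pi)=\intdes(\rho(\pi))$; applying the inductive hypothesis to $\rho(\pi)$ closes this case at once.

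The substantive case is $\des(\pi)\neq\des(\rho(\pi))$, where $\intdes(\rho(\pi))=\intdes(\pi)\pm1$ and Lemma~\ref{lem: MonoRunShift} does not apply. Here I would compare $\A(P(\pi))$ with $\A(P(\rho(\pi)))$ directly, partitioning each according to the color blocks in which $f(\pi(i))$ and $f(\pi(i+1))$ land, in the spirit of the case analysis in the proof of the Fundamental Theorem of Colored $P$-partitions. The two partition sets impose identical conditions on positions $i+1,\dots,n$; they differ only in the strictness forced at position $i$ --- block-dependent for $\pi$, uniform for $\rho(\pi)$ --- and in which top element $j_k$ is forbidden to the images of the initial run, which shifts from $j_a$ to $j_b$. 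The goal is to show that on all but one layer of this partition the two sets correspond under a $\phi$-type map as in the proof of Lemma~\ref{lem: MonoRunShift}, while the remaining layer is in bijection with $\A(P(\mu))$ for a monochromatic $\mu\in G_{r,n-1}$ whose internal descent number is $\intdes(\pi)$ when $\intdes(\rho(\pi))=\intdes(\pi)+1$ and $\intdes(\pi)-1$ when $\intdes(\rho(\pi))=\intdes(\pi)-1$. Granting this, Pascal's identity $\binom{rj+n-d}{n}=\binom{rj+n-d-1}{n}+\binom{rj+(n-1)-d}{n-1}$ (or, in the other sub-case, $\binom{rj+n-d+1}{n}=\binom{rj+n-d}{n}+\binom{rj+(n-1)-(d-1)}{n-1}$), combined with the inductive hypothesis for $\rho(\pi)$ and Lemma~\ref{lem: mono order poly} in length $n-1$, gives $\Omega_{P(\pi)}(j)=\binom{rj+n-\intdes(\pi)}{n}$, completing the induction.

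The step I expect to be the main obstacle is precisely this descent-changing case: pinning down the bijections in the block-by-block decomposition and verifying that the discrepancy between $\A(P(\pi))$ and $\A(P(\rho(\pi)))$ is exactly $\pm\binom{rj+(n-1)-d}{n-1}$. The difficulty is that two effects act at once --- the inequality at position $i$ is weak in some color blocks and strict in others, with a pattern depending on $a$, $b$, and the block, while simultaneously the forbidden maximum of the initial run moves from $j_a$ to $j_b$ --- and one must verify that these combine to produce exactly a one-term correction. An alternative packaging is to pass to generating functions using $\sum_{j\ge0}\Omega_{P(\pi)}(j)t^j=\big(\sum_{\tau\in\collin(P(\pi))}t^{\des\tau}\big)/(1-t)^{n+1}$: the theorem is then equivalent to the statement that $\sum_{\tau\in\collin(P(\pi))}t^{\des\tau}$ depends only on $\intdes(\pi)$, an invariance that Lemma~\ref{lem: MonoRunShift} already provides for every reduction $\pi\mapsto\rho(\pi)$ preserving $\des$, so that again only the descent-changing reductions require the combinatorial argument sketched above.
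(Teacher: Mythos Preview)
Your plan diverges from the paper's proof at precisely the point you flag as the main obstacle, and the paper's route is worth knowing because it dissolves that obstacle rather than confronting it. The paper does \emph{not} attempt to handle the descent-changing case $\des(\pi)\neq\des(\rho(\pi))$ at all. Instead it first \emph{standardizes} $\pi$ to a permutation $\sigma$ with the same color sequence and the same descent set, defined by relabelling so that $|\sigma(i)|<|\sigma(j)|$ if and only if $\pi(i)<\pi(j)$ in $[n]_{(r)}$. One checks easily that $\Omega_{P(\pi)}(j)=\Omega_{P(\sigma)}(j)$, since the conditions~(iii) and~(iv) defining colored $P$-partitions depend only on the colors and on the pairwise order relations of the labels, both of which are preserved. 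The crucial feature of $\sigma$ is that at every color change, say at position $i$ with $\epsilon(\sigma(i))=a\neq b=\epsilon(\sigma(i+1))$, one has $|\sigma(i)|>|\sigma(i+1)|$ if and only if $a>b$; hence applying $\rho$ to $\sigma$ never alters the descent set, and Lemma~\ref{lem: MonoRunShift} can be iterated all the way down to a monochromatic permutation without ever entering your Case~2.

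As written, your proposal has a genuine gap: Case~2 is only a sketch, and the phrase ``granting this'' marks exactly where the proof is missing. Your idea of exhibiting a $\pm\binom{rj+(n-1)-d}{n-1}$ correction via Pascal's identity is plausible, but carrying it out would require a careful block-by-block bijective analysis matching the two simultaneous effects you identify (the shifting strictness at position $i$ and the shifting forbidden maximum $j_a\mapsto j_b$), and you have not supplied that analysis. The paper's standardization trick is a one-line idea that removes the need for any of it; I recommend adopting it.
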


\begin{proof}
We first replace $\pi$ with an alternate permutation, $\sigma$, such that $|\sigma|$ has the same descent set as the internal descent set of $\pi$.
The purpose of the standardization is to find a permutation $\sigma$ such that in applying Lemma \ref{lem: MonoRunShift} we can always shift the color of the initial monochromatic run to match the color of the next term without creating a new descent.
We define $\sigma$ using the following inductive algorithm.
If $\pi(i)$ is the smallest letter in $\pi$ (still using the lexicographic order), then define $\sigma(i) = 1_{\epsilon(\pi(i))}$.
Similarly, if $\pi(j)$ is the second smallest letter in $\pi$, then set $\sigma(j) = 2_{\epsilon(\pi(j))}$.
Continue this process until all $n$ letters of $\sigma$ have been defined.

This produces a new permutation $\sigma$ with the same descent set as $\pi$.
To see why, note that $\epsilon(\pi(i)) = \epsilon(\sigma(i))$ for $i = 1,\ldots,n$.
Thus all descents between colors are preserved.
Also, if $\pi$ has a descent at position $i$ with $\epsilon(\pi(i)) = \epsilon(\pi(i+1))$, then $\pi(i) > \pi(i+1)$.
Hence $|\sigma(i)| > |\sigma(i+1)|$, and this descent is also preserved.
Next, note that $\Omega_{P(\pi)}(j) = \Omega_{P(\sigma)}(j)$ because both descent set and the individual colors of the letters are preserved under the standardization.
Repeatedly applying Lemma \ref{lem: MonoRunShift} allows us to shift from $\pi$ to $\tau$ defined by $\tau(i) = |\pi(i)|_{\epsilon(\pi(n))}$ for $i = 1,\ldots, n$.
Since $\tau$ is monochromatic, $\Omega_{P(\tau)}(j)$ is given by Lemma \ref{lem: mono order poly}.
Lastly, we know that $\intdes(\tau) = \intdes(\pi)$ and that $\Omega_{P(\pi)}(j) =  \Omega_{P(\tau)}(j)$.
Hence
\[
\Omega_{P(\pi)}(j) = \binom{rj+n-\intdes(\pi)}{n}.
\]
\end{proof}

We now have a sufficient understanding of colored $P$-partitions and are ready to prove the existence of the colored Eulerian descent algebra.
To do so, we borrow a technique of adding bars to permutations from Gessel in \cite{GesselThesis} and adapt it to our colored posets.


\section{The colored Eulerian descent algebra}\label{sec: colored descent algebra}

In this section, we will prove that the Mantaci-Reutenauer algebra contains a subalgebra induced by descent number with basis elements $C_i$ defined by
\[
C_i = \sum_{\substack{\pi \in \colpermrn \\ \des(\pi) = i}} \pi.
\]
To prove that such an algebra exists, we compute $C_\pi(s,t) = \sum_{\sigma \tau = \pi} s^{\des(\sigma)}t^{\des(\tau)}$ and show that $C_\pi(s,t)$ is a function of $\des(\pi)$.
This gives the desired result because the coefficient of $s^jt^k$ in $C_{\pi}(s,t)$ is the same as the coefficient of $\pi$ in $C_j \cdot C_k$.
Both count the number of pairs $(\sigma, \tau)$ such that $\sigma \tau = \pi$, $\des(\sigma) = j$, and $\des(\tau) = k$.
If $C_{\pi_1}(s,t) = C_{\pi_2}(s,t)$ whenever $\des(\pi_1) = \des(\pi_2)$, then the coefficients of $\pi_1$ and $\pi_2$ will always be equal in the product $C_j \cdot C_k$.
Hence this product can always be written as a linear combination of the $C_i$ terms.
Our main result is Theorem~\ref{Colored Descent Alg} which shows that the multivariate polynomials $C_{\pi}(s,t)$ are functions of $\des(\pi)$.

\begin{thm}\label{Colored Descent Alg}
For every $\pi \in \colpermrn$,
\begin{equation}\label{eq: col des algebra}
\sum_{j,k \geq 0} \binom{ rjk+j+k+n-\des(\pi)}{n} s^j t^k = \frac{\dsum_{\sigma\tau = \pi} s^{\des(\sigma)} t^{\des(\tau)}}{(1-s)^{n+1}(1-t)^{n+1}}.
\end{equation}
\end{thm}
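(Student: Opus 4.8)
The plan is to adapt the $P$-partition argument of Gessel and Petersen to the colored setting, passing from the multivariate generating function identity to a single finite enumeration.

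\textbf{Step 1: reduce to a finite identity.} I would first clear denominators in \eqref{eq: col des algebra}. Since $\sum_{k\ge 0}\Omega_{\tau}(k)\,t^{k}=t^{\des(\tau)}/(1-t)^{n+1}$ (the computation preceding Corollary~\ref{cor: order poly summation}), expanding the right-hand side of \eqref{eq: col des algebra} as a product of two such series and extracting the coefficient of $s^{j}t^{k}$ turns the theorem into the purely finite statement
\[
\sum_{\sigma\tau=\pi}\Omega_{\sigma}(j)\,\Omega_{\tau}(k)=\binom{rjk+j+k+n-\des(\pi)}{n}\qquad(j,k\ge 0).
\]
The left-hand side counts quadruples $(\sigma,\tau,f,g)$ with $\sigma\tau=\pi$, where $f$ is a colored $\sigma$-partition with parts in $[0,j]_{(r)}$ and $g$ is a colored $\tau$-partition with parts in $[0,k]_{(r)}$; by the explicit description of $\Omega_{\pi}$, such an $f$ is effectively a weakly increasing sequence $f(\sigma(1))\le\cdots\le f(\sigma(n))$ in $[0,j]$, strict at $\intDes(\sigma)$ and with $f(\sigma(n))<j$ exactly when $\epsilon(\sigma(n))\neq 0$, and similarly for $g$.

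\textbf{Step 2: model the right-hand side.} I would realize $\binom{rjk+j+k+n-\des(\pi)}{n}$ as an order polynomial by Gessel's bar technique: attach to the chain $\pi(1)\pless\cdots\pless\pi(n)$ a linearly ordered set of ``bars'' placed below, among, and above its elements to encode the $g$-values, append the chain $0_{1}\pless\cdots\pless 0_{r-1}$, and count colored $P$-partitions of the resulting colored poset with parts in $[0,j]_{(r)}$. The enumeration is then carried out exactly as for $\Omega_{\pi}(j)$ in equation~\eqref{eq: colored order poly}, by turning each strict inequality weak through a shift and applying the multichoose formula, and it must come out to $\binom{rjk+j+k+n-\des(\pi)}{n}$. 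The content of the identity is that the relevant effective target has size $rjk+j+k+1$ rather than the naive $(j+1)(k+1)$: each of the $r-1$ nonzero colors inflates the count by a factor $jk$, because a part may carry a free nonzero color only when it is neither the minimal value nor, by condition~$\mathrm{(iv)}$, a maximal value in its color block.

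\textbf{Step 3: the bijection and the main obstacle.} The bijection sends $(\sigma,\tau,f,g)$ to the single colored $P$-partition of the barred poset that records the pair $(\tau,g)$ through the bar positions and $(\sigma,f)$ through the values assigned to $\pi(1),\dots,\pi(n)$; the inverse reads off $g$, recovers $\tau$ (and with it the distribution of colors) by stably sorting $\pi(1),\dots,\pi(n)$ according to the bar positions, sets $\sigma=\pi\tau^{-1}$, and reads off $f$. The step I expect to be the crux is checking that this is compatible with the colored descent structure. Unlike in the symmetric or hyperoctahedral cases, composition in $\colpermrn$ twists colors — the color of $(\sigma\tau)(i)$ is $\epsilon(\tau(i))+\epsilon(\sigma(|\tau(i)|))$ modulo $r$ — so one must verify that the weak/strict pattern of the combined sequence falls precisely on $\Des(\pi)$ with the correct multiplicity, and in particular that at the boundary position $n$ a color jump of $\pi$ is accounted for by exactly one strict inequality, coming from either the $f$-layer or the $g$-layer but never both. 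Once this compatibility is established, the disjoint-union decompositions of Corollary~\ref{cor: order poly summation} and Corollary~\ref{cor: order poly product}, specialized to the barred poset, complete the count and hence the proof.
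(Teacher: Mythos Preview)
Your overall strategy---reduce to the finite identity
\[
\sum_{\sigma\tau=\pi}\Omega_{\sigma}(j)\,\Omega_{\tau}(k)=\binom{rjk+j+k+n-\des(\pi)}{n}
\]
and attack it with bars and colored $P$-partitions---is exactly the paper's, and Step~1 is correct. But Steps~2 and~3 are plans rather than arguments, and they omit the one genuinely colored ingredient that the paper spends most of its effort on.

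The paper organizes the left-hand side via the colored zig-zag posets $\zigipi$ (Lemma~\ref{Col Zig-Zag Extensions} gives $\collin(\zigipi)=\{\sigma:\Des(\sigma^{-1}\pi)=I\}$), then bijects barred zig-zags with barred chain posets $\chainipi$ (Lemma~\ref{Barred Col Poset Extensions}). This replaces your direct bijection in Step~3 and sidesteps the ``crux'' you flag: the descent-compatibility check never has to be done at the level of individual quadruples $(\sigma,\tau,f,g)$, because the zig-zag poset already encodes $\Des(\tau)$ structurally. Your proposed inverse (``stably sort $\pi(1),\dots,\pi(n)$ according to the bar positions'') does not by itself determine the colors of $\tau$, so as written it does not recover $\tau\in\colpermrn$.

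The more serious gap is in Step~2. After passing to barred $\chainipi$, the count factors over compartments as $\Omega_{\pi_k}(j)\prod_{i<k}\Omega_{P(\pi_i)}(j)$, where $P(\pi_i)$ is the chain $\pi_i(1)\pless\cdots\pless\pi_i(L_i)$ disjoint from $0_1\pless\cdots\pless 0_{r-1}$. For this product to concatenate into a single multichoose of size $rjk+j+k+1$, you need $\Omega_{P(\pi_i)}(j)=\binom{rj+L_i-\intdes(\pi_i)}{L_i}$ for \emph{every} subword $\pi_i$, not just monochromatic ones. Your heuristic (``each nonzero color inflates by $jk$'') is the right intuition for a monochromatic $\pi$, but when adjacent letters of $\pi_i$ carry different colors, condition~(iii) in the definition of a colored $P$-partition makes the weak/strict pattern depend on the target block $X_k$, and the naive count breaks. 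This is precisely the content of Theorem~\ref{thm: P(pi) order poly}, which the paper proves via a standardization and the $\phi_a$ bijections of Lemmas~\ref{lem: mono order poly} and~\ref{lem: MonoRunShift}. Without that result (or an equivalent), neither your Step~2 enumeration nor your Step~3 bijection can be completed.
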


Before we can prove Theorem~\ref{Colored Descent Alg}, we need to examine two special posets.
For every $I \subseteq [n]$ and $\pi \in \colpermrn$, define the \emph{colored zig-zag poset} $\zigipi$ by setting $\pi(i) \pless \pi(i+1)$ if $i \notin I$ and $\pi(i) \pgreat \pi(i+1)$ if $i \in I$ with $\pi(n+1) = 0_1$.
We still require that $0_1 \pless 0_2 \pless \cdots \pless 0_{r-1}$ but will not mention this explicitly except where necessary.

\begin{lem}\label{Col Zig-Zag Extensions}
Let $\sigma, \pi \in \colpermrn$ and $I \subseteq [n]$.
Then $\sigma \in \collin(\zigipi)$ if and only if $\Des(\sigma^{-1}\pi) = I$.
\end{lem}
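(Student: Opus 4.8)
The plan is to show that $\sigma\in\collin(\zigipi)$ is equivalent to a single, explicitly constructed anchored colored permutation $w_0$ (built from $\sigma$ and $\pi$) lying in $\Lin(\zigipi)$, and then to recognize the defining relations of $\zigipi$ as exactly the conditions saying $\Des(\sigma^{-1}\pi)=I$.

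First I would set $\tau=\sigma^{-1}\pi$, so $\pi=\sigma\tau$, and unwind the colored composition rule: if $\tau(i)=j_k$ and $\sigma(j)=l_p$ then $\pi(i)=l_{k+p}$, so $|\pi(i)|=|\sigma(|\tau(i)|)|$ and $\epsilon(\pi(i))\equiv\epsilon(\sigma(|\tau(i)|))+\epsilon(\tau(i))\pmod{r}$; that is, $\pi(i)$ is the letter $\sigma(|\tau(i)|)$, sitting at position $|\tau(i)|$ of the one-line notation of $\sigma$, with its color increased by $\epsilon(\tau(i))$. Since $\collin(\zigipi)=\coprod_{w\in\Lin(\zigipi)}\collin(w)$, we have $\sigma\in\collin(\zigipi)$ iff $\sigma$ is a shuffle of the color-unwound blocks of some $w\in\Lin(\zigipi)$. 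Any such $w$ has letter set $\{\pi(1),\dots,\pi(n),0_1,\dots,0_{r-1}\}$, and I would argue it is then forced: for $\pi(i)$'s unwound letter to be one of the letters of $\sigma$, the block of $\pi(i)$ in $w$ must equal $\epsilon(\pi(i))-\epsilon(\sigma(|\tau(i)|))=\epsilon(\tau(i))$; and then, $\sigma$ being a shuffle of the unwound blocks, the order inside each block is forced to be the corresponding subword of $\sigma$, so within block $\epsilon(\tau(i))$ the letters $\pi(i)$ occur in increasing order of $|\tau(i)|$. Call this unique word $w_0$. Conversely $\sigma\in\collin(w_0)$ by construction, so $\sigma\in\collin(\zigipi)$ iff $w_0\in\Lin(\zigipi)$.

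Next I would describe the linear order $w_0$ induces on its letters: under the correspondence $\pi(i)\leftrightarrow\tau(i)$ and $0_k\leftrightarrow 0_k$, it is exactly the restriction to $\{\tau(1),\dots,\tau(n),0_1,\dots,0_{r-1}\}$ of the lexicographic order of $[0,n]_{(r)}$, since $w_0$ sorts letters first by block (the color of $\tau(i)$), then by $|\tau(i)|$, and places $0_k$ between blocks $k-1$ and $k$ --- matching $[0,n]_{(r)}$ once one notes $0_k<j_c$ precisely when $k\le c$ for $j\ge 1$. So $w_0\in\Lin(\zigipi)$ iff each covering relation of $\zigipi$ translates into a true lexicographic inequality: for $i\in[n-1]$, $\pi(i)\pless\pi(i+1)$ (if $i\notin I$) or $\pi(i+1)\pless\pi(i)$ (if $i\in I$) becomes $\tau(i)<\tau(i+1)$ or $\tau(i+1)<\tau(i)$, i.e. $i\in I\iff i\in\Des(\tau)$; and the relation between $\pi(n)$ and $0_1$ becomes $\tau(n)<0_1$ (if $n\notin I$) or $0_1<\tau(n)$ (if $n\in I$), i.e. $\epsilon(\tau(n))=0$ or $\epsilon(\tau(n))\ne 0$, i.e. $n\notin\Des(\tau)$ or $n\in\Des(\tau)$. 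The chain $0_1\pless\cdots\pless 0_{r-1}$ is automatically respected and adds nothing. Hence $w_0\in\Lin(\zigipi)$ iff $I=\Des(\tau)=\Des(\sigma^{-1}\pi)$, which is the claim.

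The hard part will be the bookkeeping in the second step: pinning down, via the identity $\epsilon(\pi(i))\equiv\epsilon(\sigma(|\tau(i)|))+\epsilon(\tau(i))$, that the block-and-shuffle data realizing $\sigma$ as a colored linear extension of a given $w$ is uniquely determined, so that only $w_0$ needs to be tested; and, within the third step, handling the single relation tying $\pi(n)$ to $0_1$, since this is exactly where Steingr{\'{\i}}msson's convention ``$n\in\Des$ iff the last color is nonzero'' must be reconciled with the placement of $0_1$ in the order.
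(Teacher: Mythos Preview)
Your proof is correct. Both your argument and the paper's rest on the same underlying computation: the letter $\pi(i)$ must sit in block $\epsilon(\tau(i))$ of whatever anchored word realizes $\sigma$, and within a block the letters are ordered by $|\tau(i)|$, so the covering relations of $\zigipi$ translate precisely into the lexicographic comparisons among the $\tau(i)$'s that define $\Des(\tau)$. The paper carries this out directly: assuming $\sigma\in\collin(\zigipi)$, for each relation $\pi(i)\pless\pi(i+1)$ it reads off block indices $c,d$ and within-block positions $C,D$ and concludes $\sigma^{-1}\pi(i)=C_c<D_d=\sigma^{-1}\pi(i+1)$. Your route is more structural: you first isolate the \emph{unique} anchored word $w_0$ with $\sigma\in\collin(w_0)$, identify the linear order on $w_0$ with the lexicographic order on $\{\tau(1),\dots,\tau(n),0_1,\dots,0_{r-1}\}$, and then test whether $w_0\in\Lin(\zigipi)$. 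This buys you a clean biconditional---the paper's argument, as written, explicitly treats only the forward direction for $i\in[n-1]$---and it handles the boundary relation between $\pi(n)$ and $0_1$ on exactly the same footing as the internal positions, at the cost of the extra uniqueness bookkeeping for $w_0$.
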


\begin{proof}
Let $\sigma \in \colpermrn$.
Suppose $\pi(i) = A_a$ and $\pi(i+1) = B_b$.
If $i \notin I$, then we have $A_a \pless B_b$.
Thus there exist $c,d,C,D$ such that $\sigma^{-1}(A_{a-c}) = C_0$ and $ \sigma^{-1}(B_{b-d}) = D_0$ with $c\leq d$.
Also, if $c = d$, then $C < D$.
Hence  $\sigma^{-1}\pi(i) = C_c < D_d = \sigma^{-1}\pi(i+1)$.
Similarly, if $i \in I$, then $\sigma^{-1}\pi(i)  > \sigma^{-1}\pi(i+1)$.
This also shows that $\sigma^{-1}\pi(n) = C_c$, where $c=0$ if and only if $n \notin I$.
Hence $n \in \Des(\sigma^{-1}\pi)$ if and only if $n \in I$.
\end{proof}

\begin{ex}
If $r=3$, $n=3$, $\pi = 2_1 1_2 3_2$, and $I = \{1\}$, then $\zigipi$ is the poset $2_1 \pgreat 1_2 \pless 3_2 \pless 0_1$.
We have
\begin{align*}
\collin(\zigipi) &= \shuffle{1_2 2_1 3_2} \cup \shuffle{1_2 3_2 2_1} \cup \shuffle{1_23_2,2_0} \cup \shuffle{1_23_2,2_2} \\
&= \{1_2 2_0 3_2, 1_2 2_1 3_2, 1_2 2_2 3_2, 1_2 3_2 2_0, 1_2 3_2 2_1, 1_2 3_2 2_2, 2_0 1_2 3_2, 2_2 1_2 3_2\}
\end{align*}
 and
 \[
 \coprod_{\sigma \in \collin(\zigipi)} \{\sigma^{-1}\pi  \} = \{2_0 1_0 3_0, 3_0 1_0 2_0, 1_1 2_0 3_0, 2_1 1_0 3_0, 3_1 1_0 2_0, 1_2 2_0 3_0, 2_2 1_0 3_0, 3_2 1_0 2_0\}.
 \]
Here $\Des(\sigma^{-1}\pi) = \{1\}$ for every $\sigma \in \collin(\zigipi)$.
\end{ex}

Next we consider a second family of posets.
For every $I \subseteq [n]$ and $\pi \in \colpermrn$, define the \emph{colored chain poset} $\chainipi$ by setting $\pi(i) \pless \pi(i+1)$ if $i \notin I$ with $\pi(n+1) = 0_1$.
The following lemma about linear extensions of colored chain posets is similar to that for colored zig-zag posets and follows from the same proof.

\begin{lem}\label{Col Chain Extensions}
Let $\sigma, \pi \in \colpermrn$ and $I \subseteq [n]$.
Then $\sigma \in \collin(\chainipi)$ if and only if $\Des(\sigma^{-1}\pi) \subseteq I$.
\end{lem}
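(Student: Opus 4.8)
The plan is to recycle the proof of Lemma~\ref{Col Zig-Zag Extensions}: I would obtain the ``only if'' direction by the same computation, and the ``if'' direction by recognizing $\chainipi$ as the poset obtained from the zig-zag posets $Z(J,\pi)$ with $J \subseteq I$ by deleting their ``$\pgreat$'' relations, so that its linear extensions are exactly the union of theirs.

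For the forward implication, fix $\sigma \in \collin(\chainipi)$ and take $i \in [n]\setminus I$. Then $\pi(i) \pless \pi(i+1)$ is a defining relation of $\chainipi$ (with the convention $\pi(n+1) = 0_1$ and the understood chain $0_1 \pless \cdots \pless 0_{r-1}$), so the verbatim argument of Lemma~\ref{Col Zig-Zag Extensions} --- write $\pi(i) = A_a$, $\pi(i+1) = B_b$, pull $A_a$ and $B_b$ back through $\sigma^{-1}$ to color $0$, and compare --- gives $\sigma^{-1}\pi(i) < \sigma^{-1}\pi(i+1)$, i.e., $i \notin \Des(\sigma^{-1}\pi)$; the boundary case $n \notin I$ forces $\epsilon(\sigma^{-1}\pi(n)) = 0$, so $n \notin \Des(\sigma^{-1}\pi)$ as well. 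Hence $\Des(\sigma^{-1}\pi) \subseteq I$. The point of departure from the zig-zag lemma is that $\chainipi$ imposes no relation on $\pi(i)$ and $\pi(i+1)$ for $i \in I$, which is exactly why the conclusion relaxes from $\Des(\sigma^{-1}\pi) = I$ to $\Des(\sigma^{-1}\pi) \subseteq I$.

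For the reverse implication, the key combinatorial fact is that for any anchored colored permutation $w$ we have $w \in \Lin(\chainipi)$ if and only if $w \in \Lin(Z(J,\pi))$ for the set $J = \{\, i \in I : \pi(i+1) \text{ precedes } \pi(i) \text{ in } w \,\}$: indeed such a $w$ respects all the $\pless$ relations of $\chainipi$, and for each $i \in I$ it lists $\pi(i)$ and $\pi(i+1)$ in one of the two possible orders, which is precisely the data encoded by $J$. Consequently
\[
\Lin(\chainipi) = \coprod_{J \subseteq I} \Lin(Z(J,\pi)) \quad\Longrightarrow\quad \collin(\chainipi) = \coprod_{J \subseteq I} \collin(Z(J,\pi)),
\]
where the unions are disjoint since a nonempty symmetric difference $J \triangle J'$ would force $w$ to order some pair $\pi(i), \pi(i+1)$ in two contradictory ways. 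Given $\Des(\sigma^{-1}\pi) \subseteq I$, set $J := \Des(\sigma^{-1}\pi)$; Lemma~\ref{Col Zig-Zag Extensions} yields $\sigma \in \collin(Z(J,\pi))$, which is one of the summands above, so $\sigma \in \collin(\chainipi)$. (Read in the other direction, this same identity together with Lemma~\ref{Col Zig-Zag Extensions} re-proves the forward implication, so in a pinch the explicit computation can be skipped altogether.)

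I expect the only slightly delicate point --- not a genuine obstacle --- is verifying the displayed identity at the level of anchored colored permutations rather than abstract posets, so that it descends correctly through the definition $\collin(P) = \coprod_{w \in \Lin(P)}\collin(w)$, together with the disjointness bookkeeping and the usual care with the anchor element $0_1$ in position $n+1$. Everything else is a transcription of the proof of Lemma~\ref{Col Zig-Zag Extensions}.
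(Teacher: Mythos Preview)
Your proposal is correct and, for the forward implication, is exactly what the paper intends: the paper's entire proof is the one sentence ``follows from the same proof'' as Lemma~\ref{Col Zig-Zag Extensions}, and your transcription of that computation for $i\notin I$ (including the boundary case $i=n$) is precisely the argument.

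For the converse you take a slightly different route than the paper presumably has in mind. The paper's ``same proof'' suggests running the zig-zag computation backwards (or, implicitly, invoking that the sets $\collin(Z(J,\pi))$ as $J$ ranges over all subsets of $[n]$ partition $\colpermrn$, so the forward direction plus counting finishes it). You instead establish the intermediate identity $\Lin(\chainipi) = \coprod_{J\subseteq I}\Lin(Z(J,\pi))$ and pull it through to $\collin$, then cite Lemma~\ref{Col Zig-Zag Extensions} as a black box. This is a clean and honest argument; it makes explicit a structural relationship between chain and zig-zag posets that the paper never states but which is in the same spirit as the barred-poset bijection of Lemma~\ref{Barred Col Poset Extensions}. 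The cost is that you need the disjointness of the $\collin(w)$ over $w\in\Lin(\chainipi)$, which the paper takes for granted in its definition of $\collin(P)$ via $\coprod$; as you note, this is bookkeeping rather than a genuine obstacle.
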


\begin{ex}
If $r=3$, $n=3$, $\pi = 2_1 1_2 3_2$, and $I = \{1\}$, then $\chainipi$ is the poset $ 1_2 \pless 3_2 \pless 0_1$ with no relation between $2_1$ and any other elements.
Then $\collin(\chainipi) = \collin(\zigipi) \cup \{2_1 1_2 3_2\}$.
Hence
\[
\{\sigma^{-1}\pi \, | \, \sigma \in \collin(\chainipi)\} = \{\sigma^{-1}\pi \, | \, \sigma \in \collin(\zigipi)\} \cup \{1_0 2_0 3_0\}
\]
and $\Des(1_0 2_0 3_0) = \varnothing$.
\end{ex}

Next we define barred versions of both colored zig-zag posets and colored chain posets.
The barring technique is both powerful and flexible and is borrowed from Gessel \cite{GesselThesis}.
Adding bars to permutations produces very elegant results which count permutations (or multiset permutations) by descent number and major index.
For examples of this counting technique (and extensions to signed and colored permutations) see \cite{GesselThesis, MoynihanThesis}.

First, a \emph{barred colored zig-zag poset} is defined to be a colored zig-zag poset $\zigipi$ with an arbitrary number of bars placed in each of the $n+1$ spaces to the left of $0_1$ such that between any two bars the elements of the poset (not necessarily their labels) are increasing.
This can be thought of as requiring at least one bar in each ``descent'' of the colored zig-zag poset.
Note that $0_1$ will always be to the right of all the bars.
See Figure~\ref{fig: barred col zig poset} for an example of a barred $\zigipi$ poset.
\begin{figure}[htbp]
\[\xymatrix @!R @!C @C=5pt{ \ar@{-}[dddd] &  & \ar@{-}[dddd] & & &  &  \ar@{-}[dddd] &  \ar@{-}[dddd] & &  & \\   &  &  & & &  3_1 \ar@{-}[ddrrr] &  &   & &  & \\  &  &  & 1_2 \ar@{-}[urr] &  & & &  & & & 0_1 \\  & 2_0 \ar@{-}[urr]  &  &  & & &  &  & 4_1 \ar@{-}[urr]& &  \\  &&&&&&&&&& }\] 
\caption{\; A barred $\zigipi$ poset with $I = \{3\}$ and $\pi = 2_0 1_2 3_1 4_1$.}
\label{fig: barred col zig poset}
\end{figure}

If we begin with a colored zig-zag poset $\zigipi$, then we must first place a bar in space $i$ for each $i \in I$.
From there we are free to place any number of bars in any of the $n+1$ spaces.
Define $\Omega_{\zigipi}(j,k)$ to be the number of ordered pairs $(f,P)$ where $P$ is a barred $\zigipi$ poset with $k$ bars and $f$ is a colored $\zigipi$-partition with parts in $\rversion{[0,j]}$.
Since the placement of bars is independent of the choice of colored $P$-partition $f$, we can compute $\Omega_{\zigipi}(j,k)$ by considering the two objects separately.
If we begin with the colored zig-zag poset $\zigipi$, then we first place one bar in space $i$ for each $i\in I$.
Next there are
\[
\mchoose{n+1}{k-|I|} = \binom{k+n - |I|}{n}
\]
ways to place the remaining $k-|I|$ bars in the $n+1$ spaces, and hence there are $\binom{k+n-|I|}{n}$ barred $\zigipi$ posets with $k$ bars.
Thus, using equation~\eqref{eq: colored order poly}, we have
\[
\Omega_{\zigipi}(j,k) = \sum_{\sigma \in \collin(\zigipi)} \Omega_{\sigma}(j) \binom{k+n-\des(\sigma^{-1}\pi)}{n} = \sum_{\sigma \in \collin(\zigipi)} \Omega_{\sigma}(j) \Omega_{\sigma^{-1}\pi}(k).
\]
If we set $\tau = \sigma^{-1}\pi$ and sum over all $I \subseteq [n]$, then we have
\begin{equation}\label{eq: col barred equiv}
\sum_{I \subseteq [n]} \Omega_{\zigipi}(j,k) = \sum_{\sigma \tau = \pi} \Omega_{\sigma}(j) \Omega_{\tau}(k).
\end{equation}

Next we define a \emph{barred colored chain poset} to be a colored chain poset $\chainipi$ with at least one bar in space $i$ for each $i \in I$ and with an arbitrary number of bars placed on the left end.
No bars are allowed in space $i$ for $i\in [n] \setminus I$.
Thus we place at least one bar between each chain of the colored chain poset and allow for bars on the left end.
Figure~\ref{fig: barred col chain poset} gives an example of a barred $\chainipi$ poset.
\begin{figure}[htbp]
\[\xymatrix @!R @!C @R=23pt @C=10pt{  \ar@{-}[ddd] &  & \ar@{-}[ddd]  & &  &  \ar@{-}[ddd] &  \ar@{-}[ddd] & &  \\   &  & & & 3_1   &  & &  &  0_1 \\  & 2_0  &  & 1_2\ar@{-}[ur] & & & & 4_1 \ar@{-}[ur] & \\  &   &  &  & & & & &  }\] 
\caption{\; A barred $\chainipi$ poset with $I = \{1, 3\}$ and $\pi = 2_0 1_2 3_1 4_1$.}
\label{fig: barred col chain poset}
\end{figure}
Define $\Omega_{\chainipi} (j,k)$ to be the number of ordered pairs $(f,P)$ where $P$ is a barred $\chainipi$ poset with $k$ bars and $f$ is a colored $\chainipi$-partition with parts in $\rversion{[0,j]}$.
The following lemma allows us to compare colored $P$-partitions for barred colored zig-zag posets and barred colored chain posets.

\begin{lem}\label{Barred Col Poset Extensions}
For every $\pi \in \colpermrn$,
\[
\sum_{I \subseteq [n]}  \Omega_{\zigipi}(j,k) = \sum_{I \subseteq [n]} \Omega_{\chainipi}(j,k).
\]
\end{lem}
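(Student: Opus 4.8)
The plan is to evaluate each side of the identity as an explicit sum of the order polynomials $\Omega_{Z(J,\pi)}(j)$, $J\subseteq[n]$, and then observe that the two sums coincide after a single Vandermonde manipulation. The key point is that in a barred colored poset the placement of bars is independent of the chosen colored $P$-partition, so each of $\Omega_{\zigipi}(j,k)$ and $\Omega_{\chainipi}(j,k)$ factors as (number of admissible bar placements with $k$ bars) times (number of colored $P$-partitions with parts in $[0,j]_{(r)}$), and the second factor is controlled by Corollary~\ref{cor: order poly summation}.

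First I would treat the zig-zag side, where most of the work is already in the text preceding the lemma. By Lemma~\ref{Col Zig-Zag Extensions} every $\sigma\in\collin(\zigipi)$ satisfies $\des(\sigma^{-1}\pi)=|\Des(\sigma^{-1}\pi)|=|I|$, so the displayed computation of $\Omega_{\zigipi}(j,k)$ together with Corollary~\ref{cor: order poly summation} gives $\Omega_{\zigipi}(j,k)=\binom{k+n-|I|}{n}\,\Omega_{\zigipi}(j)$, and hence
\[
\sum_{I\subseteq[n]}\Omega_{\zigipi}(j,k)=\sum_{I\subseteq[n]}\binom{k+n-|I|}{n}\,\Omega_{Z(I,\pi)}(j).
\]
For the chain side I first count barred $\chainipi$ posets with $k$ bars: one bar is forced in each of the $|I|$ spaces indexed by $I$, and the remaining $k-|I|$ bars may be distributed freely among those $|I|$ spaces together with the left-hand end, i.e.\ among $|I|+1$ spaces, giving $\binom{k}{|I|}$ such posets. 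Since Lemma~\ref{Col Chain Extensions} and Lemma~\ref{Col Zig-Zag Extensions} yield $\collin(\chainipi)=\coprod_{J\subseteq I}\collin(Z(J,\pi))$, Corollary~\ref{cor: order poly summation} gives
\[
\Omega_{\chainipi}(j,k)=\binom{k}{|I|}\,\Omega_{\chainipi}(j)=\binom{k}{|I|}\sum_{J\subseteq I}\Omega_{Z(J,\pi)}(j).
\]

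Next I would sum over all $I\subseteq[n]$ and interchange the two summations:
\[
\sum_{I\subseteq[n]}\Omega_{\chainipi}(j,k)=\sum_{J\subseteq[n]}\Omega_{Z(J,\pi)}(j)\sum_{I\,:\,J\subseteq I\subseteq[n]}\binom{k}{|I|}.
\]
Writing $|I|=|J|+l$ with $l=|I\setminus J|$, the inner sum becomes $\sum_{l\ge 0}\binom{n-|J|}{l}\binom{k}{|J|+l}$, which by the Vandermonde identity equals $\binom{k+n-|J|}{n}$. Substituting this back shows that $\sum_{I}\Omega_{\chainipi}(j,k)$ equals the expression obtained above for $\sum_{I}\Omega_{\zigipi}(j,k)$, which is the lemma.

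None of these steps is genuinely hard; the place needing care is the count of barred $\chainipi$ posets, since the bar placement there is asymmetric — extra bars may be added only on the left end or onto a space already indexed by $I$, never into a space indexed by $[n]\setminus I$ — and an off-by-one here would spoil the final match. (A direct bijective proof, obtained by sorting the increasing blocks of a barred zig-zag poset into chains, is also possible but messier than the counting argument above, which is self-contained given the earlier results.)
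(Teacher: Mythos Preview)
Your argument is correct. The count $\binom{k}{|I|}$ for barred $\chainipi$ posets is right, the decomposition $\collin(\chainipi)=\coprod_{J\subseteq I}\collin(Z(J,\pi))$ follows from Lemmas~\ref{Col Zig-Zag Extensions} and~\ref{Col Chain Extensions}, and the Vandermonde step $\sum_{l}\binom{n-|J|}{l}\binom{k}{|J|+l}=\binom{k+n-|J|}{n}$ finishes things cleanly.

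The paper, however, proves this lemma by a one-line bijection rather than by computation. Fixing $\sigma\in\colpermrn$, the paper matches barred zig-zag posets with $k$ bars having $\sigma$ as a colored linear extension to barred chain posets with $k$ bars having $\sigma$ as a colored linear extension: simply delete the relation between $\pi(i)$ and $\pi(i+1)$ at every space $i$ that contains a bar. On the zig-zag side $\sigma$ pins down $I=\Des(\sigma^{-1}\pi)$ uniquely, while on the chain side the relevant $I'$ is read off as the set of barred spaces, so the map is invertible once $\sigma$ is fixed; summing over $\sigma$ and weighting by $\Omega_\sigma(j)$ gives the lemma. This is exactly the ``sorting increasing blocks into chains'' bijection you allude to in your closing remark, and it is in fact cleaner than you suggest: no case analysis is needed. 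Your route trades that bijection for an explicit double sum and a Vandermonde identity, which has the virtue of being entirely mechanical and of exhibiting both sides as the same weighted sum $\sum_{J}\binom{k+n-|J|}{n}\Omega_{Z(J,\pi)}(j)$.
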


\begin{proof}
We show that for every $\sigma \in \colpermrn$ there is a bijection between barred colored zig-zag posets with $k$ bars such that $\sigma$ is a linear extension of the underlying colored zig-zag poset and barred colored chain posets with $k$ bars such that $\sigma$ is a linear extension of the underlying colored chain poset.
This bijection is simply the map that sends each barred colored zig-zag poset to the barred colored chain poset obtained by removing the relation between $\pi(i)$ and $\pi(i+1)$ in $\zigipi$ for every space $i$ containing at least one bar.
\end{proof}

The bijection in Lemma~\ref{Barred Col Poset Extensions} maps Figure~\ref{fig: barred col zig poset} to Figure~\ref{fig: barred col chain poset}.
Now that all the pieces are in place, we are ready to prove the main result of this paper.

\begin{proof}[Proof of Theorem~\ref{Colored Descent Alg}]
First we see that
\begin{align*}
\frac{\dsum_{\sigma\tau = \pi} s^{\des(\sigma)} t^{\des(\tau)}}{(1-s)^{n+1}(1-t)^{n+1}} &=  \sum_{j,k \geq 0} \sum_{\sigma \tau = \pi} \binom{j+n}{n}\binom{k+n}{n}s^{j+\des(\sigma)} t^{k+\des(\tau)}\\
&= \sum_{j,k \geq 0} \sum_{\sigma \tau = \pi} \binom{j+n-\des(\sigma)}{n}\binom{k+n-\des(\tau)}{n}s^j t^k \\ 
&= \sum_{j,k \geq 0} \sum_{I \subseteq [n]} \Omega_{\zigipi}(j,k) s^j t^k,
\end{align*}
where the last equality follows from equations \eqref{eq: colored order poly} and \eqref{eq: col barred equiv}.
By Lemma~\ref{Barred Col Poset Extensions}, we can switch from colored zig-zag posets to colored chain posets, and we have
\[
\frac{\dsum_{\sigma\tau = \pi} s^{\des(\sigma)} t^{\des(\tau)}}{(1-s)^{n+1}(1-t)^{n+1}} = \sum_{j,k \geq 0} \sum_{I \subseteq [n]} \Omega_{\chainipi}(j,k) s^j t^k.
\]
The only remaining step is to prove that
\[
\sum_{I \subseteq [n]} \Omega_{\chainipi}(j,k) =  \binom{rjk+j+k+n-\des(\pi)}{n}.
\]

Fix a barred $\chainipi$ poset with $k$ bars and use the bars to define compartments numbered $0,\ldots,k$ from left to right.
Define $\pi_i$ to be the (possibly empty) subword of $\pi$ in compartment $i$ and denote the length of $\pi_i$ by $L_i$.
Then
\[
\Omega_{\chainipi}(j) = \Omega_{\pi_k}(j)\prod_{i=0}^{k-1} \Omega_{P(\pi_i)}(j) .
\]
By Theorem \ref{thm: P(pi) order poly}, we can assume that $\pi$ is monochromatic.
For $i=0,\ldots,k-1$, we let $\Omega_{P(\pi_i)}(j)$ count solutions to the inequalities
\[
i(rj+1) \leq s_{i_1} \leq  \cdots \leq s_{i_{L_i}} \leq i(rj+1) + rj,
\]
with $s_{i_l} < s_{i_{l+1}}$ if $l \in \intDes(\pi_i)$.
Next we let $\Omega_{\pi_k}(j)$ count solutions to the inequalities
\[
k(rj+1) \leq s_{k_1} \leq \cdots \leq s_{k_{L_k}} \leq rjk+j+k,
\]
with $s_{k_l} < s_{k_{l+1}}$ if $l \in \intDes(\pi_k)$ and $s_{k_{L_k}} < rjk+j+k$ if $n \in \Des(\pi)$.
By concatenating these inequalities, we see that if we sum over all $I \subseteq [n]$ and all barred $\chainipi$ posets with $k$ bars, then $\sum_{I \subseteq [n]} \Omega_{\chainipi}(j,k)$ is equal to the number of solutions to the inequalities
\[
0 \leq s_1 \leq \cdots \leq s_n \leq rjk+j+k,
\]
with $s_i < s_{i+1}$ if $i \in \intDes(\pi)$ and with $s_n < rjk+j+k$ if $n \in \Des(\pi)$.
Hence we conclude that
\[
\sum_{I \subseteq [n]} \Omega_{\chainipi}(j,k)  = \mchoose{rjk+j+k +1-\des(\pi)}{n} = \binom{rjk+j+k +n-\des(\pi)}{n}.
\]
\end{proof}

Figure~\ref{fig: every barred colored chain poset} provides a visual representation of the final step in the proof of Theorem~\ref{Colored Descent Alg}.
It depicts every barred $C(I,2_11_3)$ poset with $2$ bars and $I \subseteq \{1,2\}$.
Thus we can identify each barred $\chainipi$ poset with a barred colored permutation with underlying colored permutation $\pi$ such that $0_1$ is in the rightmost compartment.
\begin{figure}[htbp]
\begin{minipage}{0.3\textwidth}
\[\xymatrix @R=11pt @C=1pt {
&& \ar@{-}[ddd] & \ar@{-}[ddd] & \\
& 1_3 &&& \\
2_1\ar@{-}[ur] &&&& 0_1  \\
&&&&
}\]
\end{minipage}
\hfill
\begin{minipage}{0.3\textwidth}
\[\xymatrix @R=24pt @C=1pt {
& \ar@{-}[dd] && \ar@{-}[dd] & \\
2_1 && 1_3 && 0_1 \\
&&&&
}\]
\end{minipage}
\hfill
\begin{minipage}{0.3\textwidth}
\[\xymatrix @R=11pt @C=1pt {
& \ar@{-}[ddd] & \ar@{-}[ddd] && \\
&&&& 0_1 \\
2_1 &&& 1_3\ar@{-}[ur] &   \\
&&&&
}\]
\end{minipage}

\begin{minipage}{0.3\textwidth}
\[\xymatrix @R=10pt @C=1pt {
\ar@{-}[ddd] &&& \ar@{-}[ddd] & \\
&& 1_3 && \\
& 2_1\ar@{-}[ur] &&& 0_1 \\
&&&&
}\] 
\end{minipage}
\hfill
\begin{minipage}{0.3\textwidth}
\[\xymatrix @R=10pt @C=1pt {
\ar@{-}[ddd] && \ar@{-}[ddd] && \\
&&&& 0_1  \\
& 2_1 && 1_3\ar@{-}[ur] && \\
&&&&
}\] 
\end{minipage}
\hfill
\begin{minipage}{0.3\textwidth}
\[\xymatrix @R=4pt @C=1pt {
\ar@{-}[dddd]  & \ar@{-}[dddd] &&& \\
&&&& 0_1 \\
&&& 1_3\ar@{-}[ur] & \\
&& 2_1\ar@{-}[ur] && \\
&&&&
}\] 
\end{minipage}
\caption{All six barred $C(I,2_11_3)$ posets with $2$ bars and $I \subseteq \{1,2\}$.}
\label{fig: every barred colored chain poset}
\end{figure}
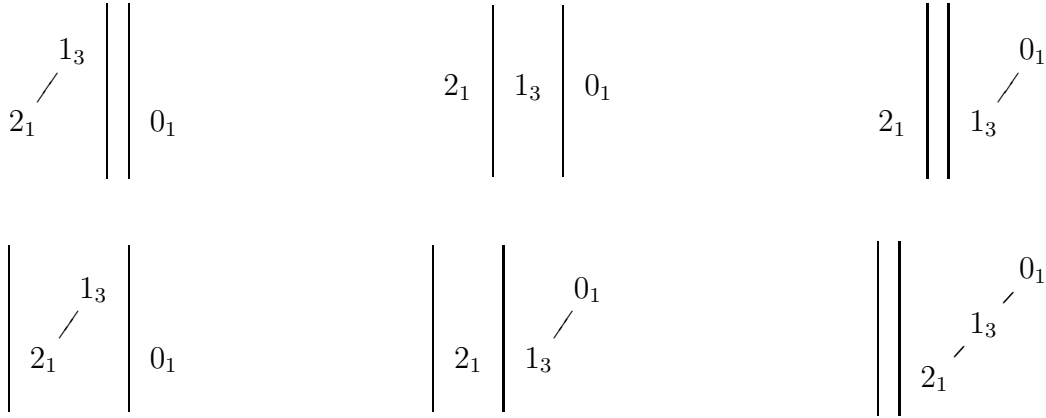

\newpage

\section{Colored Eulerian Idempotents}
\label{sec: colored eulerian idempotents}

Finally, we will explore a change of basis for the colored Eulerian descent algebra.
There exists a basis of pairwise orthogonal idempotent elements which generalize the familiar Eulerian idempotents originally found in \cite{MielnikPlebanski1970} and the type~$B$ Eulerian idempotents of \cite{BergeronBergeron1992}.
Our proof technique mirrors that of other authors (see \cite{ChowThesis2001, Petersen2005}).
Define the \emph{colored structure polynomial} $\phi(x)$ by
\[
\phi(x) = \sum_{\pi \in \colpermrn} \binom{x+n-\des(\pi)}{n} \pi.
\]
If we expand both sides of equation~\eqref{eq: col des algebra} and compare the coefficients of $s^j t^k$, we see that
\[
\binom{rjk+j+k+n-\des(\pi)}{n} = \sum_{\sigma\tau = \pi}  \binom{j+n -\des(\sigma)}{n} \binom{k+n - \des(\tau)}{n}.
\]
This implies that $\phi(j)\phi(k) = \phi(rjk + j +k)$ for all $j,k \geq 0$.
Thus we have the following theorem.

\begin{thm}\label{Orthogonal Idempotents}
As polynomials in $x$ and $y$ with coefficients in the group algebra of $\colpermrn$,
\[
\phi(x)\phi(y) = \phi(rxy+x+y).
\]
\end{thm}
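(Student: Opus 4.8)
The plan is to upgrade the numerical identity $\phi(j)\phi(k) = \phi(rjk+j+k)$, which holds for all $j,k \in \N$ and was recorded in the paragraph just before the theorem, to an identity of polynomials. To make sense of $\phi$ as a polynomial, I would first observe that for fixed $n$ and fixed $d \in [0,n]$ the expression
\[
\binom{x+n-d}{n} = \frac{1}{n!}(x+n-d)(x+n-d-1)\cdots(x+1-d)
\]
is a genuine polynomial in $x$ of degree $n$ with rational coefficients which, under the convention $\binom{m}{n} = 0$ for $0 \le m < n$, agrees at every nonnegative integer with the counting interpretation used in \eqref{eq: colored order poly} (this convention is exactly what the generating function $t^{d}/(1-t)^{n+1}$ produces). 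Consequently $\phi(x) = \sum_{\pi \in \colpermrn}\binom{x+n-\des(\pi)}{n}\,\pi$ is a polynomial in $x$ with coefficients in the group algebra of $\colpermrn$; the substitution $x \mapsto rxy+x+y$ turns $\phi(rxy+x+y)$ into a polynomial in $x$ and $y$, and $\phi(x)\phi(y)$ is likewise such a polynomial. So both sides of the asserted identity are bivariate polynomials over the group algebra.

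Next I would record the numerical identity precisely. Expanding $1/(1-s)^{n+1} = \sum_{a\ge 0}\binom{a+n}{n}s^a$ (and similarly in $t$) in the right-hand side of \eqref{eq: col des algebra} and comparing coefficients of $s^jt^k$ gives, for every $\pi \in \colpermrn$ and all $j,k \ge 0$,
\[
\binom{rjk+j+k+n-\des(\pi)}{n} = \sum_{\sigma\tau = \pi}\binom{j+n-\des(\sigma)}{n}\binom{k+n-\des(\tau)}{n}.
\]
The left-hand side is the coefficient of $\pi$ in $\phi(rjk+j+k)$, and the right-hand side is the coefficient of $\pi$ in $\phi(j)\phi(k) = \sum_{\sigma,\tau}\binom{j+n-\des(\sigma)}{n}\binom{k+n-\des(\tau)}{n}\,\sigma\tau$. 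Since this holds for every $\pi$, we obtain $\phi(j)\phi(k) = \phi(rjk+j+k)$ for all $j,k \in \N$.

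Finally I would conclude with a grid-vanishing argument applied coefficient by coefficient. For each $\pi \in \colpermrn$ let $g_\pi(x,y)$ be the coefficient of $\pi$ in $\phi(x)\phi(y) - \phi(rxy+x+y)$, a polynomial in $\mathbb{Q}[x,y]$; by the previous step $g_\pi(j,k) = 0$ for all $j,k \in \N$. Fixing $j$, the univariate polynomial $g_\pi(j,y)$ has infinitely many roots and hence vanishes identically; writing $g_\pi(x,y) = \sum_m c_m(x)\,y^m$, each $c_m(x)$ then vanishes at every $j \in \N$ and is therefore the zero polynomial. Hence $g_\pi = 0$ for all $\pi$, which is precisely the claimed identity $\phi(x)\phi(y) = \phi(rxy+x+y)$. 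I do not expect a real obstacle here: the substantive work was already done in Theorem~\ref{Colored Descent Alg}, and the one delicate point is keeping the $\binom{m}{n} = 0$ convention consistent so that evaluating the polynomial $\phi$ at a nonnegative integer reproduces the generating-function coefficients appearing in \eqref{eq: col des algebra}.
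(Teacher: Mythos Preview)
Your proposal is correct and follows essentially the same approach as the paper: extract the numerical identity $\phi(j)\phi(k)=\phi(rjk+j+k)$ for all nonnegative integers by comparing coefficients of $s^jt^k$ in Theorem~\ref{Colored Descent Alg}, then pass to the polynomial identity. The paper treats the last step as immediate (``Thus we have the following theorem''), whereas you spell out the two-variable vanishing argument explicitly; this is extra care rather than a different method.
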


If we substitute $x \leftarrow (x-1)/r$ and $y \leftarrow (y-1)/r$ into the previous theorem, we see that $\phi((x-1)/r)\phi((y-1)/r) = \phi((xy-1)/r)$.
Thus if we expand $\phi((x-1)/r)$, we have
\[
\phi((x-1)/r) = \sum_{\pi \in \colpermrn} \binom{\frac{x-1}{r}+n-\des(\pi)}{n} \pi = \sum_{i=0}^n c_i x^i,
\]
and the $c_i$ are orthogonal idempotents which span the colored Eulerian descent algebra.
To see why, note that $\phi((x-1)/r)\phi((y-1)/r) = \phi((xy-1)/r)$ implies that
\[
\sum_{i,j=0}^n c_ic_j x^iy^j = \sum_{i=0}^n c_i (xy)^i.
\]
Thus $c_ic_j = 0$ unless $i=j$, in which case $c_i^2 = c_i$.
The $c_i$ reduce to the familiar Eulerian idempotents $e_i$ when $r=1$.
When $r=2$, our orthogonal idempotents are equivalent to the type~$B$ orthogonal idempotents originally found in \cite{BergeronBergeron1992}.

\begin{ex}
Let $r=5$ and $n=3$.
If $C_i$ is the sum of all permutations in $G_{5,3}$ with $i$~descents, then we end up with the following orthogonal idempotents:
\begin{align*}
c_0 &= \frac{1}{750} (504 C_0 - 36 C_1 + 24 C_2 - 66 C_3) \\
c_1 &= \frac{1}{750} (218 C_0 + 23 C_1 - 22 C_2 + 83 C_3) \\
c_2 &= \frac{1}{750} (27 C_0 + 12 C_1 - 3 C_2 - 18 C_3) \\
c_3 &= \frac{1}{750} (C_0 + C_1 + C_2 + C_3) .
\end{align*}
\end{ex}

We end with two negative results to natural questions.
First, if we consider all possible descent set definitions produced by setting $\pi(0) = 0_a$ and $\pi(n+1) = 0_b$, then the definition we use induces the only set partition that gives rise to an algebra.
This can be easily verified for $G_{2,2}$.

Finally, it is natural to ask whether there is a colored descent algebra induced by the descent set.
The descent set statistic induces a set partition of $\colpermrn$ that would fall between the set partition induced by descent number and the set partition corresponding to the Mantaci-Reutenauer algebra, and thus the descent set statistic would induce an intermediate algebra.
Interestingly, we can see that the descent set statistic already fails to induce an algebra in the case of $G_{2,2}$.


\section*{Acknowledgements}

I would like to thank Ira Gessel for all his guidance and Rachel Bayless for all her helpful suggestions.

\bibliographystyle{amsplain}
\bibliography{CEDAbibliography}

\providecommand{\bysame}{\leavevmode\hbox to3em{\hrulefill}\thinspace}
\providecommand{\MR}{\relax\ifhmode\unskip\space\fi MR }
\providecommand{\MRhref}[2]{%
  \href{http://www.ams.org/mathscinet-getitem?mr=#1}{#2}
}
\providecommand{\href}[2]{#2}
\begin{thebibliography}{10}

\bibitem{BaumannHohlweg2008}
Pierre Baumann and Christophe Hohlweg, \emph{A {S}olomon descent theory for the
  wreath products {$G\wr \mathfrak{S}_n$}}, Trans. Amer. Math. Soc.
  \textbf{360} (2008), no.~3, 1475--1538 (electronic). \MR{2357703
  (2009c:16090)}

\bibitem{BergeronBergeron1992I}
Fran{\c{c}}ois Bergeron and Nantel Bergeron, \emph{A decomposition of the
  descent algebra of the hyperoctahedral group. {I}}, J. Algebra \textbf{148}
  (1992), no.~1, 86--97. \MR{1161567 (93d:20077)}

\bibitem{BergeronBergeron1992}
\bysame, \emph{Orthogonal idempotents in the descent algebra of {$B_n$} and
  applications}, J. Pure Appl. Algebra \textbf{79} (1992), no.~2, 109--129.
  \MR{1163285 (93f:20054)}

\bibitem{Bergeron1992II}
Nantel Bergeron, \emph{A decomposition of the descent algebra of the
  hyperoctahedral group. {II}}, J. Algebra \textbf{148} (1992), no.~1, 98--122.
  \MR{1161568 (93d:20078)}

\bibitem{ChowThesis2001}
Chak-On Chow, \emph{Noncommutative symmetric functions of type {B}}, ProQuest
  LLC, Ann Arbor, MI, 2001, Thesis (Ph.D.)--Massachusetts Institute of
  Technology. \MR{2717011}

\bibitem{Gessel2014}
Ira~M. Gessel, \emph{A historical survey of {$P$}-partitions}, Preprint.

\bibitem{GesselThesis}
\bysame, \emph{Generating functions and enumeration of sequences}, ProQuest
  LLC, Ann Arbor, MI, 1977, Thesis (Ph.D.)--Massachusetts Institute of
  Technology. \MR{2940769}

\bibitem{Gessel1984}
\bysame, \emph{Multipartite {$P$}-partitions and inner products of skew {S}chur
  functions}, Combinatorics and algebra ({B}oulder, {C}olo., 1983), Contemp.
  Math., vol.~34, Amer. Math. Soc., Providence, RI, 1984, pp.~289--317.
  \MR{777705 (86k:05007)}

\bibitem{HsiaoPetersen2010}
Samuel~K. Hsiao and T.~Kyle Petersen, \emph{Colored posets and colored
  quasisymmetric functions}, Ann. Comb. \textbf{14} (2010), no.~2, 251--289.
  \MR{2653892 (2011f:05328)}

\bibitem{Knuth1970}
Donald~E. Knuth, \emph{A note on solid partitions}, Math. Comp. \textbf{24}
  (1970), 955--961. \MR{0277401 (43 \#3134)}

\bibitem{Loday1989}
Jean-Louis Loday, \emph{Op\'erations sur l'homologie cyclique des alg\`ebres
  commutatives}, Invent. Math. \textbf{96} (1989), no.~1, 205--230. \MR{981743
  (89m:18017)}

\bibitem{MantaciReutenauer1995}
Roberto Mantaci and Christophe Reutenauer, \emph{A generalization of
  {S}olomon's algebra for hyperoctahedral groups and other wreath products},
  Comm. Algebra \textbf{23} (1995), no.~1, 27--56. \MR{1311773 (95k:05182)}

\bibitem{MielnikPlebanski1970}
Bogdan Mielnik and Jerzy Pleba{\'n}ski, \emph{Combinatorial approach to
  {B}aker-{C}ampbell-{H}ausdorff exponents}, Ann. Inst. H. Poincar\'e Sect. A
  (N.S.) \textbf{12} (1970), 215--254. \MR{0273922 (42 \#8798)}

\bibitem{MoynihanThesis}
Matthew Moynihan, \emph{The flag descent algebra and the colored {E}ulerian
  descent algebra}, ProQuest LLC, Ann Arbor, MI, 2012, Thesis (Ph.D.)--Brandeis
  University. \MR{3093951}

\bibitem{Petersen2005}
T.~Kyle Petersen, \emph{Cyclic descents and {$P$}-partitions}, J. Algebraic
  Combin. \textbf{22} (2005), no.~3, 343--375. \MR{2181371 (2006k:05009)}

\bibitem{Poirier1998}
St{\'e}phane Poirier, \emph{Cycle type and descent set in wreath products},
  Proceedings of the 7th {C}onference on {F}ormal {P}ower {S}eries and
  {A}lgebraic {C}ombinatorics ({N}oisy-le-{G}rand, 1995), vol. 180, 1998,
  pp.~315--343. \MR{1603753 (99i:20014)}

\bibitem{Solomon1976}
Louis Solomon, \emph{A {M}ackey formula in the group ring of a {C}oxeter
  group}, J. Algebra \textbf{41} (1976), no.~2, 255--264. \MR{0444756 (56
  \#3104)}

\bibitem{Stanley1972}
Richard~P. Stanley, \emph{Ordered structures and partitions}, American
  Mathematical Society, Providence, R.I., 1972, Memoirs of the American
  Mathematical Society, No. 119. \MR{0332509 (48 \#10836)}

\bibitem{Stanley2012}
\bysame, \emph{Enumerative combinatorics. {V}olume 1}, second ed., Cambridge
  Studies in Advanced Mathematics, vol.~49, Cambridge University Press,
  Cambridge, 2012. \MR{2868112}

\bibitem{Steingrimsson1994}
Einar Steingr{\'{\i}}msson, \emph{Permutation statistics of indexed
  permutations}, European J. Combin. \textbf{15} (1994), no.~2, 187--205.
  \MR{1261065 (95g:05009)}

\bibitem{Stembridge1997}
John~R. Stembridge, \emph{Enriched {$P$}-partitions}, Trans. Amer. Math. Soc.
  \textbf{349} (1997), no.~2, 763--788. \MR{1389788 (97f:06006)}

\end{thebibliography}

\end{document}